\crefname{appendix}{}{} 
\newcommand{\reals}{\ensuremath{\mathbb{R}}}
\newcommand{\norm}[1]{\ensuremath{\vert\vert#1\vert\vert}}
\newcommand{\pdiff}[1]{\ensuremath{\frac{\partial}{\partial #1}}}
\newcommand{\Pdiff}[2]{\ensuremath{\frac{\partial #1}{\partial #2}}}
\newcommand{\Pdifftwo}[2]{\ensuremath{\frac{\partial^2 #1}{\partial {#2}^2}}}
\newcommand{\cdiff}[1]{\ensuremath{\frac{D}{\partial #1}}}
\newcommand{\cdifftwo}[1]{\ensuremath{\frac{D^2}{\partial #1^2}}}
\newcommand{\vbar}{\ensuremath{\biggr\vert}}
\newcommand{\flow}{\ensuremath{\phi}}
\newcommand{\oT}[1]{\ensuremath{\mathring{T}_1 #1}}
\newcommand{\T}[1]{\ensuremath{T_\partial #1}}
\newcommand{\uT}[1]{\ensuremath{T_1 #1}}
\newcommand{\genflow}{\ensuremath{\mathcal F}}
\renewcommand{\O}{\ensuremath{\mathcal{O}}}
\newcommand{\comment}[1]{}
\newtheorem{theorem}{Theorem}
\newtheorem{lemma}[theorem]{Lemma}
\newtheorem{proposition}[theorem]{Proposition}
\newtheorem{corollary}{Corollary}[theorem]
\begin{document}
\begin{frontmatter}
	\title{
Travelling Times in Scattering by Obstacles in Curved Space}
\author{Tal Gurfinkel\corref{cor1}}
\ead{21710178@student.uwa.edu.au}
\author{Lyle Noakes\corref{}}
\ead{lyle.noakes@uwa.edu.au}
\author{Luchezar Stoyanov}
\ead{luchezar.stoyanov@uwa.edu.au}
\address{School of Mathematics and Statistics, University of Western Australia, Crawley 6009 WA, Australia}
\cortext[cor1]{Corresponding author}
\date{\today}
\begin{abstract}
	We consider travelling times of billiard trajectories in the exterior of an obstacle K on a two-dimensional Riemannian manifold $M$. We prove that given two obstacles with almost the same travelling times, the generalised geodesic flows on the non-trapping parts of their respective phase-spaces will have a time-preserving conjugacy. Moreover, if $M$ has non-positive sectional curvature we prove that if $K$ and $L$ are two obstacles with strictly convex boundaries and almost the same travelling times then $K$ and $L$ are identical.
\end{abstract}
\begin{keyword}
Inverse scattering \sep generalised geodesic \sep travelling time \sep Riemannian 2-manifold \sep billiard \sep obstacle\\
\textit{MSC: } 37D40 \sep 37D50 \sep 53C21 \sep 53D25
\end{keyword}
\end{frontmatter}
\section{Introduction}

Let $M$ be a geodesically complete, 2-dimensional Riemannian manifold and let $K$ be a smooth codimension-0 submanifold of $M$ with boundary, such that $\overline{M\backslash K}$ is connected. Suppose there is another codimension-0 submanifold $S$ whose boundary is strictly convex, such that $K\subset S$. We will call $K$ an obstacle and $S$ a bounding submanifold. A generalised geodesic $\gamma:[a,b]\to S_K$ in $S_K = \overline{S\backslash K}$ is any unit speed, piecewise smooth extremal of the length functional
\[\int_a^b \norm{\dot\gamma}\ dt,\]
with respect to variations of paths fixing the endpoints (\cite{MORSE},\cite{LEERIEMANN}). That is, $\gamma$ is a piecewise smooth curve made up of smooth geodesic segments in $S_K$, which reflects off the boundary $\partial K$ symmetrically across the normal. More precisely still, there are discrete points $t_1,t_2,\dots$ where $\gamma$ is not differentiable, and there we have
	\begin{equation}\label{eq:symmetrytangent}
		\left<\dot\gamma(t_i^-),v\right>=\left<\dot\gamma(t_i^+),v\right>
	\end{equation}
with respect to any tangent $v$ to $\partial K$.
If $\gamma$ is a geodesic between two points $x,y\in\partial S$ then we say that $\gamma$ is an $(x,y)$-geodesic. We define the set of travelling times $\mathcal T_K$ of $K$ to be the set of all triples $(x,y,t_\gamma)$ where $t_\gamma$ is the length of an $(x,y)$-geodesic. We also call $t_\gamma$ the travelling time of $\gamma$.

Let $\uT S_K$ be the unit tangent bundle of $S_K$ and define the quotient $\oT S_K$ by identifying angles with their reflection across the boundary of $K$, according to \cref{eq:symmetrytangent}.
	Let $\gamma$ be a generalised geodesic in $S_K$ generated by a point $(x,\omega)\in \uT{S_K}$. We say that $\gamma$ is \emph{non-trapped} if there are distinct $t_0,t_0'\in\reals$ such that $\gamma(t_0),\gamma(t_0')\in \partial S$. Otherwise we say that that $\gamma$ is \emph{trapped}. 
	Denote the set of all $(x,\omega)\in \uT{S_K}$ which generate a trapped generalised geodesic by $Trap(S_K)$. Also let \begin{equation}
		Trap^\partial(S_K) = \{ (x,\omega)\in Trap(S_K) : x\in \partial S \}
	\end{equation}
	We will define a generalisation of the geodesic flow as follows.
	Let $\gamma_{(x,\omega)}$ be the unique generalised geodesic in $S_K$ defined by the initial conditions
	\begin{align}
		\gamma_{(x,\omega)}(0) = x && \dot\gamma_{(x,\omega)}(0) = \omega
	\end{align}
	Define for each $t\in\reals$ the generalised geodesic flow, $\genflow_t:\oT{S_K}\to \oT{S_K}$ by
	\begin{equation}
		\genflow_t(x,\omega) = (\gamma_{(x,\omega)}(t),\dot\gamma_{(x,\omega)}(t))
	\end{equation}
	This is the billiard flow as defined in \cite{MR832433} on general Riemannian manifolds.
	Note that if $K$ is empty then $\genflow_t$ is simply the geodesic flow (see e.g. \cite{GEOFLOW}), which we denote by  $\flow_t:\uT S_K\to \uT S_K$.
	Let $K$ and $L$ be two obstacles with the same bounding manifold $S\subseteq M$. $K$ and $L$ are said to have \emph{conjugate flows} if there exists a homeomorphism
	\begin{equation}
		\Phi : \oT{S_K}\backslash Trap(S_K)\to \oT{S_L}\backslash Trap(S_L)
	\end{equation}
	Such that $\Phi|_{\T{S}} = \text{Id}$ and
	\begin{equation}\label{eq:conjugacy}
		\genflow^{(L)}_t \circ \Phi = \Phi\circ \genflow^{(K)}_t\text{ for all } t\in\reals
	\end{equation}
 Moreover, let
 		$T_K(x,y) = \{t\in [0,\infty) : (x,y,t)\in \mathcal T_K\}$.
 	We say that $K$ and $L$ have \emph{almost the same travelling times} if $T_K(x,y) = T_L(x,y)$ for almost all $(x,y)\in \partial S\times\partial S$.
 
We are now ready to state the two main results of this paper.
\begin{theorem}[Conjugacy Theorem]\label{thm:conjugacy}
	Two obstacles $K$ and $L$ with the same bounding manifold $S\subseteq M$ have conjugate flows if and only if they have almost the same travelling times.
\end{theorem}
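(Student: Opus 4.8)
The statement is an equivalence, so the plan is to prove the two implications separately. The implication from conjugate flows to almost equal travelling times is the easy direction, while its converse---constructing a flow conjugacy out of the scalar travelling-time data---is the substantial part.

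For the forward implication, suppose a conjugacy $\Phi$ exists. Fix $(x,\omega)\in\T{S}$ with $x\in\partial S$ generating a non-trapped geodesic, and let it exit $S$ at $y\in\partial S$ after time $t$, so that $\genflow^{(K)}_t(x,\omega)=(y,\omega')$ with $(y,\omega')\in\T{S}$. Since $\Phi$ fixes $\T{S}$ and intertwines the flows by \cref{eq:conjugacy}, one computes
\[
\genflow^{(L)}_t(x,\omega)=\genflow^{(L)}_t(\Phi(x,\omega))=\Phi(\genflow^{(K)}_t(x,\omega))=\Phi(y,\omega')=(y,\omega').
\]
Hence the $L$-trajectory from $(x,\omega)$ is also an $(x,y)$-geodesic of length $t$, giving $t\in T_L(x,y)$ whenever $t\in T_K(x,y)$; applying $\Phi^{-1}$ gives the reverse inclusion. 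Every element of $T_K(x,y)$ arises from such a non-trapped boundary direction, and the directions that are glancing to $\partial K$ or tangent to $\partial S$ form a set of measure zero, so this yields $T_K(x,y)=T_L(x,y)$ for almost all $(x,y)$.

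For the converse I would construct $\Phi$ directly from the travelling times. The first step is to recover the full boundary scattering relation from the scalars $T_K(x,y)$: writing $t(x,y)$ for a smooth branch of the travelling time, the first variation of arc length identifies the tangential derivatives $\partial_x t$ and $\partial_y t$ with the tangential components of the unit initial and terminal velocities, and the unit-speed condition together with the inward/outward conditions then fix the normal components. Thus the travelling times determine, for almost every inward boundary vector $(a,\xi)\in\T{S}$, both the transit time $t$ and the exit vector $\genflow^{(K)}_t(a,\xi)=(b,\eta)\in\T{S}$. Because $\partial S$ is strictly convex, the maximal trajectory in $S_K$ through any non-trapped $(p,\nu)$ crosses $\partial S$ exactly once inward and once outward, so $(p,\nu)=\genflow^{(K)}_s(a,\xi)$ for a unique inward $(a,\xi)\in\T{S}$ and $s\in[0,t]$. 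I would then define
\[
\Phi\bigl(\genflow^{(K)}_s(a,\xi)\bigr)=\genflow^{(L)}_s(a,\xi),
\]
which fixes $\T{S}$ and intertwines the flows by construction. Equality of the scattering relations forces the $K$- and $L$-trajectories from $(a,\xi)$ to share the same exit data $(b,\eta)$ at the same time $t$; this makes $\Phi$ consistent at the outgoing crossing as well, so it is a well-defined bijection of the non-trapped parts, with inverse given by the symmetric construction interchanging $K$ and $L$.

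The main obstacle is to upgrade this pointwise, almost-everywhere recipe to an honest homeomorphism on all of $\oT{S_K}\setminus Trap(S_K)$. I expect the delicate step to be showing that the two scattering relations agree not merely almost everywhere but on a dense open set of non-trapped inward boundary vectors, and then propagating this agreement by continuity to the entire non-trapped set, using continuous dependence of $\genflow_t$ on initial data away from $Trap(S_K)$. This requires controlling the exceptional sets---where $t(x,y)$ fails to be differentiable, or where a trajectory is tangent to $\partial K$ or $\partial S$---and arguing that they are closed with empty interior and hence do not obstruct continuity of $\Phi$ or of its inverse. Once $\Phi$ is continuous with continuous inverse, \cref{eq:conjugacy} and the normalisation $\Phi|_{\T{S}}=\text{Id}$, already valid on a dense set, extend to the whole domain by continuity, completing the converse and hence the equivalence.
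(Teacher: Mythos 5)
Your overall strategy coincides with the paper's: the direction ``conjugate flows $\Rightarrow$ same travelling times'' is the same short computation the paper gives (flow from a boundary vector, use $\Phi|_{\T{S}}=\mathrm{Id}$ and \cref{eq:conjugacy} at the exit point), and for the converse you, like the paper, first try to recover the boundary scattering relation from the travelling times via the first-variation identity $grad_x T=-\dot\gamma/\norm{\dot\gamma}$ (the paper's \Cref{thm:timeangle}) and then transport the definition of $\Phi$ along trajectories using the boundary, where the two flows agree, as the reference set. However, your proposal has a genuine gap exactly at the crux of the converse, which you flag (``I expect the delicate step to be\dots'') but do not close. The problem is that $T_K(x,y)$ is a \emph{set} of times: many distinct $(x,y)$-geodesics may join the same pair of boundary points. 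Knowing $T_K(x,y)=T_L(x,y)$ as sets for almost all $(x,y)$ does not hand you ``a smooth branch $t(x,y)$'' common to $K$ and $L$ that you may differentiate. Before the first-variation argument can transfer any directional information, you must match, to the given $K$-geodesic, an $L$-geodesic whose travelling-time \emph{function} agrees with $t_K(x,y)$ on a whole neighbourhood, not merely at a single pair where the time sets coincide. This is precisely what the paper's \Cref{thm:flowequivalency} establishes, and it consumes most of the work: it needs \Cref{thm:distincttimes} (for fixed $x$, only countably many pairs of distinct directions give the same endpoint and the same time, so times generically separate geodesics), \Cref{thm:tangency} and \Cref{thm:trappedboundary} (to discard tangent and trapped data), \Cref{thm:xydependance} (the local diffeomorphism $F:V\to W$), and a compactness/sequence argument showing the matched $L$-branch is locally unique, whence $t_L=t_K$ on a dense subset of $W$ and then, by continuity of the two branch functions, on all of $W$. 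Without this branch-matching mechanism, your step ``equality of the scattering relations forces the $K$- and $L$-trajectories to share the same exit data'' is an assumption, not a conclusion.

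A secondary but real flaw: you assert that strict convexity of $\partial S$ makes every non-trapped trajectory cross $\partial S$ exactly once inward and once outward. \Cref{thm:conjugacy} carries no curvature hypothesis on $M$, and on a general manifold a geodesic can leave a region with strictly convex boundary and re-enter it (think of $M$ a sphere), so a trajectory may meet $\partial S$ many times; moreover a non-trapped point may have both of its boundary crossings in forward time, so parametrising by a unique backward ``entry vector'' $(a,\xi)$ can fail. The paper avoids both issues: it proves flow agreement segment-by-segment on $\T{S}$ (so repeated exits and re-entries are handled by applying \Cref{thm:flowequivalency} to each boundary-to-boundary leg), and it defines $\Phi$ locally as $\genflow^{(L)}_{-t}\circ\genflow^{(K)}_{t}$ where $t$ is a locally uniform bound on the forward time needed to clear $\partial S$ (established by a short sequence argument), which makes well-definedness, continuity, and the intertwining relation \cref{eq:conjugacy} immediate. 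You would need to replace your entry-vector parametrisation by something of this kind for the construction to go through in the stated generality.
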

\begin{theorem}[Uniqueness of Convex Obstacles]\label{thm:uniqueness}
	Suppose that $M$ has non-positive sectional curvature. If $K$ and $L$ are two (disjoint unions of) strictly convex obstacles with almost the same travelling times in $M$, then $K = L$.
\end{theorem}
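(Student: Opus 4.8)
The plan is to deduce the result from \cref{thm:conjugacy} together with the rigidity of strictly convex obstacles in non-positive curvature. First I would invoke \cref{thm:conjugacy}: since $K$ and $L$ have almost the same travelling times, their flows are conjugate, so there is a homeomorphism $\Phi:\oT{S_K}\backslash Trap(S_K)\to\oT{S_L}\backslash Trap(S_L)$ with $\Phi|_{\T S}=\text{Id}$ and $\genflow^{(L)}_t\circ\Phi=\Phi\circ\genflow^{(K)}_t$ for all $t$. The key extraction is that this conjugacy makes the \emph{scattering relation} of the two obstacles coincide: if $(x,\omega)\in\T S$ generates a non-trapped trajectory in $S_K$ that first exits $\partial S$ at time $\tau$ in the state $(y,\eta)\in\T S$, then applying the conjugacy and $\Phi|_{\T S}=\text{Id}$ at both the entry and the exit states gives $\genflow^{(L)}_\tau(x,\omega)=\Phi(y,\eta)=(y,\eta)$. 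Hence the incoming data, the outgoing data, and the travelling time all agree for $K$ and $L$ along every non-trapped boundary vector.

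Next I would use this to force $\partial K=\partial L$ by contradiction. Suppose $K\neq L$; since both are closed regions with strictly convex boundary, after interchanging the roles of $K$ and $L$ if necessary there is a point $p\in\partial K$ with $p\notin L$. Consider the geodesic $\sigma$ of $M$ through $p$ whose initial direction is tangent to $\partial K$ at $p$. Strict convexity of $\partial K$ ensures that, near $p$, $\sigma$ meets $\overline K$ only at $p$, so $\sigma$ is a glancing (tangent) ray for $K$; and because $p\notin L$ with $L$ closed, $\sigma$ passes freely through a neighbourhood of $p$ in $S_L$. Non-positive curvature is what lets me control $\sigma$ globally: there are no conjugate points, nearby geodesics spread apart, and a ray reflected off a strictly convex obstacle diverges from it, so the trajectories I care about are non-trapped and reach $\partial S$ transversally. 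I can therefore embed $\sigma$ in a one-parameter family of boundary geodesics crossing the tangency and compare travelling times on either side.

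The decisive step is the local analysis of the travelling time near the glancing ray $\sigma$. For the obstacle $K$, as the incoming boundary vector crosses the glancing direction the trajectory switches from a free geodesic to one that strikes $\partial K$ and reflects, and the travelling-time function develops a characteristic one-sided singularity (a fold governed by the second fundamental form of $\partial K$ at $p$) precisely along the glancing set cut out by $\partial K$. For $L$, since a neighbourhood of $p$ is free of $L$, the corresponding travelling-time function is smooth across the same direction, with no such singularity. This contradicts the equality of travelling times established in the first step. Hence every point of $\partial K$ lies in $L$ and, symmetrically, every point of $\partial L$ lies in $K$, so $\partial K=\partial L$; since $\overline{M\backslash K}$ and $\overline{M\backslash L}$ are connected with the same bounding manifold, this yields $K=L$. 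I expect the main obstacle to be exactly this glancing analysis: making rigorous that the presence of a strictly convex boundary at $p$ produces a travelling-time singularity that cannot be mimicked in its absence, while simultaneously guaranteeing — via non-positive curvature — that the distinguishing trajectories are genuinely non-trapped and reach $\partial S$, so that they are recorded by the scattering relation rather than hidden in the trapped or measure-zero exceptional sets.
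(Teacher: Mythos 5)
Your first step---extracting from \Cref{thm:conjugacy} that the incoming vector, outgoing vector and travelling time of $K$ and $L$ agree along every non-trapped boundary vector---is sound, and it matches how the paper uses the conjugacy. The genuine gap is in your decisive step. You claim that because a neighbourhood of $p$ is free of $L$, the travelling-time function of $L$ is \emph{smooth} across the glancing direction, so it cannot match the fold singularity produced by tangency to $\partial K$ at $p$. This does not follow: smoothness of the $L$-travelling time near the glancing ray is a property of the \emph{entire} trajectory in $S_L$, not of its behaviour near $p$. The trajectory that glances $\partial K$ at $p$ passes freely through a neighbourhood of $p$ when regarded in $S_L$, but it can perfectly well be tangent to $\partial L$ at some \emph{other} point, and such a tangency produces exactly the same kind of travelling-time singularity. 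So the singularities of the two travelling-time functions can match without $p\in L$, and no contradiction arises at this stage. The paper concedes precisely this point: a tangency to $K$ inside $U$ forces, by singularity matching, a tangency of the corresponding $S_L$-trajectory at some point $z(x,v_x)\in\partial L$ away from $U$, and the real work is to rule that scenario out.

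That remaining case is what the machinery of Section 4 of the paper is for. One takes a one-parameter family of glancing rays (using \Cref{thm:nontrappedboundary} and \Cref{thm:onetangent} to ensure infinitely many non-trapped rays with exactly one tangency in each of $S_K$ and $S_L$), observes that the corresponding tangency points on $\partial L$ must accumulate at some $z_0\in\partial L$, and then constructs two strictly convex fronts via \Cref{thm:convexlemma}: a front $X$ hit orthogonally by the geodesics tangent to $\partial K$ along $U$, and a front $Y$ hit orthogonally by the geodesics tangent to $\partial L$ near $z_0$. This yields infinitely many normal geodesics of $X$ striking $Y$ orthogonally in an arbitrarily small neighbourhood, contradicting \Cref{thm:confrontorth}; the proof of that lemma (together with \Cref{thm:conprop,thm:genconprop}) is where non-positive curvature genuinely enters, through propagation of strict convexity of fronts under the flow and under reflection. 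In your proposal, non-positive curvature appears only as a vague appeal to divergence of geodesics, and without the convex-front comparison argument---or some substitute that excludes tangencies of $\partial L$ elsewhere along the glancing family---your contradiction cannot be completed.
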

Inverse problems related to  metric rigidity have been studied for a very long time in Riemannian geometry - see e.g. \cite{MR3454376} and the references there for more information. A different kind of problems studied extensively recently for various types of dynamical systems concern the so called Marked Length Spectrum, defined as the set of all lengths of periodic orbits in phase space together with their marking - see \cite{MR3990606}, \cite{MR3743701} and the references there for more information. Various similar problems have been considered in scattering by obstacles in Euclidean spaces in the last 20 years. A natural and rather important problem in inverse scattering by obstacles in $\reals^n$ is to get information  about the obstacle $K$ from its so-called scattering length spectrum, which is in a certain way related to travelling times of billiard (and more general) trajectories in the exterior of the obstacle - see \cite{MR1942248} for details. An analogue of the Conjugacy Theorem (\Cref{thm:conjugacy}) above was first established in \cite{MR1942248} in this context. For travelling times of trajectories in the exterior of obstacles in $\reals^n$, such a theorem was established in \cite{MR3351977}. In both \cite{MR1942248} and \cite{MR3351977} the Conjugacy Theorem was used to recover geometric information about the obstacle from travelling times. 

It turned out that some kinds of obstacles are uniquely recoverable from their travelling times (and also from their scattering length spectra), e.g. star-shaped obstacles are in this class, as shown in \cite{MR1942248}.  Obstacles in $\reals^n$  that are disjoint unions of strictly convex bodies with smooth boundaries are also uniquely recoverable - this was proved in \cite{MR3359579} for $n \geq 3$ and in \cite{NSUniqueness} for $n =2$. In \cite{MR3655806} a certain generalisation was established of the well-known Santalo's Formula in Riemannian geometry. As a consequence, it was shown in \cite{MR3655806} that, assuming the set of trapped points has Lebesgue measure zero, one can recover for example the volume of the obstacle from travelling times. 

It should be remarked that in general, the set of trapped points could be rather large. As an example of M. Livshits shows  (see e.g. Figure 1 in \cite{MR3359579}  or  in \cite{MR3655806}), in some cases the set of trapped points contains a non-trivial open set, and then the  obstacle cannot be recovered from travelling times. \Cref{thm:uniqueness} above establishes a result similar to the one in \cite{NSUniqueness}, although the situation considered in this paper is significantly more complicated.

This paper is separated into a preliminary section, three main sections and an appendix. In \Cref{sec:conjugacy,sec:negcurve}, we prove \Cref{thm:conjugacy,thm:uniqueness} respectively. While in \Cref{sec:hardproofs} we give proofs for three technical propositions which are of fundamental importance to proving \Cref{thm:uniqueness}. Throughout the paper we draw on arguments from \cite{MR3351977} and \cite{NSUniqueness}, although in general we either adapt and extend the arguments to the more complicated case of Riemannian 2-manifolds or provide completely new proofs.

\section{Preliminaries}
We now state some results which will be useful in proving \Cref{thm:conjugacy}. The following result is well known, see \cite{MR872698} for a proof.
\begin{lemma}\label{thm:tangency}
	For almost all $(x,\omega)\in \T{S}\backslash Trap^\partial(S_K)$ the generalised geodesic defined by $\gamma(t) = \genflow_t(x,\omega)$ is not tangent to $\partial S_K$ anywhere.
\end{lemma}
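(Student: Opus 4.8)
The plan is to prove that tangential directions form a measure-zero set in $\T{S}\backslash Trap^\partial(S_K)$ by analysing the structure of the tangency locus as a hypersurface in the boundary phase space. First I would set up the relevant object: for a point $(x,\omega)\in\T{S}$ generating a non-trapped geodesic, the generalised geodesic $\gamma$ may encounter $\partial K$ either transversally (a genuine reflection, governed by \cref{eq:symmetrytangent}) or tangentially. I would define, for each positive integer $n$, the set $\Sigma_n$ of those $(x,\omega)$ whose generated geodesic is tangent to $\partial S_K$ at its $n$-th intersection with $\partial K$, and observe that the set of ``bad'' directions is $\bigcup_n \Sigma_n$; since a countable union of null sets is null, it suffices to show each $\Sigma_n$ has measure zero.

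The key step is to understand $\Sigma_n$ locally. I would use the fact that $\partial S$ and $\partial K$ are smooth one-dimensional submanifolds of the surface $M$, so $\T{S}$ is (locally) a two-dimensional manifold parametrised by arc length $s$ along $\partial S$ together with the angle $\theta$ that $\omega$ makes with the inward normal. The map sending $(x,\omega)$ to its first point of contact with $\partial K$ (when it exists) and the incidence angle there is smooth on the open set where the geodesic meets $\partial K$ transversally and before any tangency occurs; tangency corresponds exactly to the incidence angle equalling $\pm\pi/2$. Thus, on each chart, $\Sigma_n$ is the preimage of a single value under a smooth real-valued function (the incidence angle of the $n$-th contact). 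The crucial point is that this angle function is \emph{submersive}, i.e. not locally constant, in the tangency directions: varying $\theta$ while holding the launch point $x$ fixed sweeps the incidence angle through an interval, so the level set $\{\theta_{\mathrm{inc}}=\pm\pi/2\}$ is a smooth curve (codimension one) rather than an open set. By the regular value theorem, or more simply because a nonconstant real-analytic/smooth function with nonvanishing derivative in one coordinate has null level sets, $\Sigma_n$ is locally contained in a smooth one-dimensional submanifold of the two-dimensional $\T{S}$, hence has two-dimensional Lebesgue measure zero.

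The hard part will be handling the points where the contact-point map itself fails to be smooth or well defined, namely the boundaries between geodesics that reach a given reflection and those that do not, and the accumulation of reflections for geodesics that graze $\partial K$ many times. I would address this by restricting attention to the open dense set where the first $n$ contacts are all transversal and the geodesic is non-trapped, invoking \Cref{thm:tangency}'s own non-trapping hypothesis via $Trap^\partial(S_K)$ to discard the bad behaviour at infinity, and treating the finitely many transition loci (where a new tangency is being created) as the very measure-zero sets $\Sigma_n$ we are trying to bound. A delicate point is verifying the submersivity claim uniformly enough to patch the local estimates together; here I would rely on the smooth dependence of geodesics on initial conditions (smoothness of $\flow_t$ between reflections) and the strict convexity of $\partial S$, which guarantees that the launch data $(s,\theta)$ genuinely control the incidence geometry. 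Finally I would assemble the local null-set estimates using a countable cover of $\T{S}$ by charts and the countable union over $n$, concluding that the full tangency set is null and hence that the stated ``almost all'' conclusion holds.
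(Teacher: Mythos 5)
Your high-level architecture (split the tangency set by the index $n$ of the offending contact, prove each piece is null, take countable unions) is reasonable, but the central step fails, and it fails at exactly the point where your proof would have to do real work. You claim that on a chart $\Sigma_n$ is ``the preimage of a single value under a smooth real-valued function'' (the incidence angle of the $n$-th contact) and then invoke the regular value theorem. However, the $n$-th contact point and its incidence angle are smooth functions of $(s,\theta)$ only on the open set where that contact is \emph{transversal} --- which is precisely the complement of $\Sigma_n$. The set you want to control lies on the boundary of the domain of smoothness, not inside it, so the preimage of $\pm\pi/2$ within that domain is empty and the regular value theorem says nothing. Worse, the angle function does not extend smoothly across $\Sigma_n$: at a tangency the first-contact map has a fold-type singularity, with the incidence angle behaving like $\pi/2 - c\sqrt{|\theta-\theta_0|}$ on one side, while on the other side nearby geodesics miss that piece of $\partial K$ entirely and the contact point jumps or ceases to exist. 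A set sitting in the boundary of a domain of smoothness can a priori have positive measure, so nothing is proved. Your submersivity justification (``varying $\theta$ sweeps the incidence angle through an interval'') is also only a heuristic, and it is false in general: this lemma makes no curvature assumption on $M$, and in the presence of conjugate points the incidence angle of a pencil of geodesics from a fixed point can be \emph{constant} in $\theta$ (on the round sphere, all geodesics from $x$ meet every geodesic circle centred at the antipode of $x$ at the same angle). Finally, a small omission: the statement concerns tangency to $\partial S_K = \partial S \cup \partial K$, while your $\Sigma_n$ only tracks contacts with $\partial K$.

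The missing idea --- and the one underlying the classical argument that the paper itself invokes by citation to \cite{MR872698} rather than proving --- is to add the time variable and work with the glancing manifold instead of an angle function on $\T{S}$. Let $G = \uT{\partial K}$ be the set of unit vectors tangent to $\partial K$; since $\partial K$ is a curve, $G$ is a $1$-dimensional submanifold of the $3$-dimensional phase space. A point of $\T{S}$ generates a geodesic tangent to $\partial K$ if and only if its orbit meets $G$. On the open set of $(x,\omega,t)$ whose orbit up to time $t$ has only transversal reflections (here is where your induction on $n$ is genuinely used), the map $\Psi(x,\omega,t) = \genflow_t(x,\omega)$ is a local diffeomorphism from an open subset of $\T{S}\times\reals$ onto its image, since away from the null set of directions tangent to $\partial S$ the cross-section $\T{S}$ is transversal to the flow. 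Hence $\Psi^{-1}(G)$ is a $1$-dimensional submanifold, and $\Sigma_n$ is its projection to $\T{S}$: the image of a $1$-manifold under countably many smooth maps into the $2$-dimensional $\T{S}$, therefore Lebesgue null. Equivalently, $\Sigma_n$ is the image of the piecewise-smooth map $G\to\T{S}$ flowing each glancing vector backwards to its first arrival at $\partial S$. This pure dimension count needs no nondegeneracy at all, which matters: even a repaired version of your approach (restricting the angle to the smooth incidence manifold in $(s,\theta,t)$-space and asking that $\pi/2$ be a regular value there) would require the geodesic curvature of $\partial K$ to be nonzero at the tangency, and $K$ is not assumed convex in this lemma.
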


\begin{lemma}\label{thm:distincttimes}
	Fix $x_0\in \partial S$. The set of pairs of distinct directions $\omega_1,\omega_2\in \T{S}_{x_0}$ which generate generalised geodesics with  the same endpoint and the same travelling time is countable.
\end{lemma}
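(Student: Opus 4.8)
The plan is to encode the data in the \emph{exit map}. For a direction $\omega\in\T{S}_{x_0}$ generating a non-trapped geodesic, let $G(\omega)=(y(\omega),t(\omega))\in\partial S\times[0,\infty)$ record the first return point $y(\omega)=\gamma_{(x_0,\omega)}(t(\omega))$ to $\partial S$ together with the corresponding travelling time. These are defined exactly on the set $U\subseteq\T{S}_{x_0}$ of non-trapped directions, and trapped directions generate geodesics with no endpoint and hence contribute no pairs. If I write $D=\{\omega\in U:\exists\,\omega'\in U,\ \omega'\neq\omega,\ G(\omega')=G(\omega)\}$ for the set of directions whose exit data is shared by some other direction, then every bad pair lies in $D\times D$, so it suffices to prove that $D$ is countable.

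First I would stratify $U$ by the \emph{itinerary} of the geodesic: the ordered sequence of components of $\partial K$ it reflects from, and the number of reflections before exiting. On the complement of the directions whose geodesic is tangent to $\partial S_K$ somewhere this itinerary is locally constant and $G$ depends smoothly on $\omega$ through the flow $\genflow_t$; the tangency directions are exactly the interfaces between consecutive strata, hence at most countable (cf. \Cref{thm:tangency}), as are the non-transversal exits. Restricted to the smooth locus, $U$ is an open subset of the circle $\T{S}_{x_0}$ and so has at most countably many arc components. Since there are only countably many ordered pairs of such arcs $(A,B)$, it is enough to show that each coincidence set $C_{A,B}=\{(\omega_1,\omega_2)\in A\times B:\omega_1\neq\omega_2,\ G(\omega_1)=G(\omega_2)\}$ is countable.

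The engine is the first variation of arc length. Parametrising $\partial S$ by arclength $u$, and using that the initial point $x_0$ is fixed, differentiation of the length gives $t'(\omega)=\langle\dot\gamma_{(x_0,\omega)}(t(\omega)),\partial_u\rangle\,u'(\omega)=\cos\phi_{\mathrm{out}}(\omega)\,u'(\omega)$, where $\phi_{\mathrm{out}}(\omega)$ is the angle the exit velocity makes with $\partial S$. Hence the Jacobian of $(\omega_1,\omega_2)\mapsto G(\omega_1)-G(\omega_2)$ in the coordinates $(u,t)$ has determinant $u'(\omega_1)\,u'(\omega_2)\,\big(\cos\phi_{\mathrm{out}}(\omega_1)-\cos\phi_{\mathrm{out}}(\omega_2)\big)$. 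At a coincidence the two geodesics reach the common point $y$ transversally and from inside $S_K$, so their exit velocities share the same (positive) normal component; if the tangential components $\cos\phi_{\mathrm{out}}$ agreed as well the two exit velocities would coincide, and by uniqueness of geodesics through a point in a given direction the two geodesics would be equal, contradicting $\omega_1\neq\omega_2$. Therefore the determinant vanishes only on the \emph{focal locus} where $u'(\omega_1)=0$ or $u'(\omega_2)=0$. Off this locus the implicit function theorem makes every coincidence isolated, contributing only countably many points; and if the critical set $\{\,u'=0\,\}$ of each exit-point function is discrete, then its level sets are discrete as well (by Rolle's theorem a level set accumulating at a point forces critical points to accumulate there), so the coincidences lying over the focal locus are countable too. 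This closes the count for each $C_{A,B}$.

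The main obstacle is precisely the discreteness of this focal locus, i.e.\ of the directions $\omega$ for which $y(\omega)$ is conjugate to $x_0$ along $\gamma_{(x_0,\omega)}$. The worst case is an \emph{interval} of focal directions: there $t'\equiv\cos\phi_{\mathrm{out}}\,u'\equiv0$ as well, so $G$ is constant and a whole arc of directions focuses to a single endpoint with a single length, which would manufacture uncountably many bad pairs. I would exclude this, and more generally prove that focal directions do not accumulate, using the strict convexity of $\partial S$ together with the isolation of conjugate points along any fixed geodesic: I expect strict convexity at $x_0$ and $y$ to force the relevant Jacobi field to cross zero transversally, ruling out a continuum (and an accumulation) of geodesics from $x_0$ to a common boundary point of common length. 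Establishing this rigidity from the convexity hypothesis is the delicate part; once it is in hand, combining countably many arcs with countably many coincidences per arc-pair yields the stated countability.
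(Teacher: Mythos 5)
Your engine is the same as the paper's: the paper also forms a coincidence map on pairs of directions (there $G(u,v)=(T(u,v),H(u,v))$, the differences of travelling times and of endpoint parameters), differentiates it via the first-variation formula (\Cref{thm:timeangle}), uses the observation that two distinct unit velocities arriving at a common boundary point from inside $S$ must have distinct tangential components, and finishes with the implicit function theorem and a countable cover. Your determinant $u'(\omega_1)\,u'(\omega_2)\,\bigl(\cos\phi_{\mathrm{out}}(\omega_1)-\cos\phi_{\mathrm{out}}(\omega_2)\bigr)$ is exactly the quantity the paper manipulates.

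The genuine gap is the one you yourself flag: the focal locus $\{u'=0\}$. Your proposal defers its discreteness to a hoped-for rigidity statement (``strict convexity of $\partial S$ forces the relevant Jacobi field to cross zero transversally''), which you do not prove --- and which cannot be proved from the named hypotheses. Strict convexity of $\partial S$ constrains only the boundary curve; focusing is produced by the interior metric of $M$ (on which the lemma places no restriction) or by reflections off concave pieces of $\partial K$ (the lemma assumes no convexity of $K$), and both can realize precisely your ``worst case'' of an interval of focal directions with constant exit data. Concretely: in the Euclidean plane let $\partial K$ contain a concave arc of an ellipse and put both foci on a strictly convex $\partial S$; every ray from the focus $F_1$ that hits the arc reflects through $F_2$ with the same total length $2a$, so an entire interval of directions shares one endpoint and one travelling time. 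A reflection-free Riemannian analogue is a spherical lune joining two points of $\partial S$, with the metric extended suitably outside it. So isolation of conjugate points along each individual geodesic, plus convexity of $\partial S$, cannot yield the discreteness your count needs, and your argument does not close. For what it is worth, you have isolated the exact spot the paper's own proof glosses over: passing to \cref{eq:omegarho} and concluding $A=0$ (and then $B=0$) silently requires $\frac{\partial\widetilde u}{\partial u}\frac{\partial\widetilde v}{\partial v}\neq 0$, i.e.\ that neither direction is focal. A smaller unjustified step in your write-up: countability of the tangency directions within the single fibre $\T{S}_{x_0}$ does not follow from \Cref{thm:tangency}, which is a measure-zero statement on the two-dimensional space $\T{S}$, so the pairs involving tangent geodesics are also left uncounted.
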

\begin{proof}
	See \Cref{appendix}.
\end{proof}

The proof of the following fact would use the exact same argument as proposition 2.3 in \cite{MR1775186}. We therefore omit the proof.

\begin{lemma}\label{thm:trappedboundary}
	The set $Trap^\partial(S_K)$ has measure zero in $\T{S}$.
\end{lemma}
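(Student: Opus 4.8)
The plan is to exploit the fact that the generalised geodesic flow preserves a natural \emph{finite} measure and to combine this with a Santal\'o-type integral formula, so that trajectories which never return to $\partial S$ are forced into a null set. Write $\mu$ for the Liouville (Riemannian) measure on $\uT{S_K}$. Since $\partial S$ is strictly convex and $K\subset S$, the region $S_K$ is relatively compact with finite area, so $\mu(\uT{S_K})<\infty$, and $\genflow_t$ is $\mu$-preserving. Let $\Sigma\subseteq\T S$ denote the set of $(x,\omega)$ with $x\in\partial S$ and $\omega$ pointing into $S_K$, equipped with the induced boundary measure $d\sigma$, and write $\nu$ for the inward unit normal to $\partial S$; the strict convexity of $\partial S$ guarantees that the tangential (glancing) directions form a $d\sigma$-null subset of $\T S$.

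First I would introduce the forward exit time $\tau(x,\omega)=\inf\{t>0:\gamma_{(x,\omega)}(t)\in\partial S\}\in(0,\infty]$. Up to a null set, the forward-trapped boundary directions in $\Sigma$ are exactly $\{\tau=\infty\}$. By \Cref{thm:tangency}, almost every $(x,\omega)\in\Sigma$ generates a trajectory that is nowhere tangent to $\partial S_K$, so off this null glancing set the flow-out map $\Psi(x,\omega,t)=\genflow_t(x,\omega)$ is a local diffeomorphism with Jacobian equal to the Santal\'o density $\langle\omega,\nu\rangle$. The next step is to verify that $\Psi$, restricted to $\{(x,\omega,t):(x,\omega)\in\Sigma,\ 0\le t<\tau(x,\omega)\}$, is injective modulo $\mu$-null sets and has full image in $\uT{S_K}$: a coincidence $\genflow_{t_1}(a_1)=\genflow_{t_2}(a_2)$ with distinct $a_1,a_2\in\Sigma$ would force one trajectory to reach $\partial S$ strictly before its exit time, while a self-coincidence along a single trajectory forces periodicity, and both phenomena occur only on null sets.

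With injectivity established I would apply the change of variables, i.e.\ the generalised Santal\'o formula of \cite{MR3655806}, to obtain
\[
\mu(\uT{S_K})
=\int_\Sigma\Big(\int_0^{\tau(x,\omega)}dt\Big)\,\langle\omega,\nu\rangle\,d\sigma
=\int_\Sigma \tau(x,\omega)\,\langle\omega,\nu\rangle\,d\sigma .
\]
Because the left-hand side is finite and $\langle\omega,\nu\rangle>0$ off the glancing null set, the integrand must be finite for $\sigma$-almost every $(x,\omega)$, whence $\tau<\infty$ almost everywhere and the forward-trapped set $\{\tau=\infty\}$ is $\sigma$-null. Running the identical argument with reversed time shows that the backward-trapped set among the outward-pointing directions is also null. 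Since a boundary point is trapped precisely when its trajectory meets $\partial S$ at no time $t\neq 0$, we have $Trap^\partial(S_K)\subseteq\{\tau=\infty\}$ on the inward part and symmetrically on the outward part, each of which is null; together with the null glancing set this exhausts $\T S$ and proves the lemma.

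The main obstacle I anticipate is justifying the measure-preserving change of variables rigorously for the \emph{generalised} flow rather than the smooth geodesic flow: one must confirm that $\genflow_t$ stays smooth and volume-preserving across reflections off $\partial K$, and that the set of glancing or tangential trajectories---where $\Psi$ degenerates and the density $\langle\omega,\nu\rangle$ vanishes---is genuinely negligible. This is exactly the place where \Cref{thm:tangency} is indispensable, and it is the only step in which the reflecting boundary $\partial K$, as opposed to a purely geodesic flow, complicates the otherwise standard Santal\'o argument.
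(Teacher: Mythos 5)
Your overall strategy is sound, and it is of the same measure-theoretic kind as the argument the paper has in mind: the paper itself gives no proof at all, deferring to the argument of Proposition 2.3 in \cite{MR1775186}, so what you have written is a reconstruction of the standard Liouville-measure/Santal\'o proof --- finite invariant Liouville measure, flow-out from the boundary with density $\left<\omega,\nu\right>$, finiteness of $\int_\Sigma\tau\left<\omega,\nu\right>\,d\sigma$, hence $\tau<\infty$ almost everywhere, plus time reversal for the outward-pointing directions and nullity of the glancing set.

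One step, however, is genuinely false as stated: the claim that $\Psi$ ``has full image in $\uT{S_K}$'', and with it the equality $\mu(\uT{S_K})=\int_\Sigma\tau\left<\omega,\nu\right>\,d\sigma$. The image of $\Psi$ consists exactly of those interior points whose \emph{backward} trajectory reaches $\partial S$; it omits every backward-trapped point of $\uT{S_K}$. The paper's introduction recalls Livshits' example (see Figure 1 in \cite{MR3359579} or \cite{MR3655806}), in which the trapped set contains a nontrivial open set, so the omitted set can have positive $\mu$-measure and your equality can genuinely fail; this is also why the volume-recovery consequence of the generalised Santal\'o formula in \cite{MR3655806} is stated under the hypothesis that the trapped set is null --- a hypothesis you cannot assume while proving this very lemma. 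Fortunately the repair is immediate: injectivity of $\Psi$ (which in fact holds exactly, not merely modulo null sets, since $\tau$ is a first hitting time --- a coincidence $\genflow_{t_1}(a_1)=\genflow_{t_2}(a_2)$ with $t_1<t_2$ would place $a_1\in\partial S$ on the trajectory of $a_2$ at the time $t_2-t_1<\tau(a_2)$) yields $\int_\Sigma\tau\left<\omega,\nu\right>\,d\sigma=\mu(\mathrm{Im}\,\Psi)\leq\mu(\uT{S_K})<\infty$, and this inequality is all your argument actually uses. Two minor points: strict convexity of $\partial S$ is not needed to make the glancing directions $\sigma$-null (they form a codimension-one subset of $\T{S}$ regardless), and your final containment of $Trap^\partial(S_K)$ in the union of the forward-trapped inward part, the backward-trapped outward part, and the glancing set is exactly the right way to conclude, since trapping in the sense of the paper requires the trajectory to avoid $\partial S$ at \emph{all} nonzero times.
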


\begin{lemma}\label{thm:xydependance}
 	Suppose $\gamma$ is generalised geodesic starting at $x_0$ and ending at $y_0$, with $k$ successive reflection points $x_1,\dots, x_k = y_0$. Let $\omega_0$ be the initial velocity of $\gamma$. Then there exist neighbourhoods $W$ of $(x_0,y_0)$ and $U_i\subseteq \partial S_K$ of $x_i$ such that there exist unique smooth maps $x_i:W\to U_i$, with $x_i(x_0,y_0) = x_i$. Furthermore, there is a neighbourhood $V$ of $(x_0,\omega_0)$ such that $F:V\to W$ defined by
 	\begin{equation}
 		F(x,w) = (x, \pi_1\circ\mathcal{P}_K(x,\omega))
 	\end{equation}
 	is a diffeomorphism onto $W$.
 \end{lemma}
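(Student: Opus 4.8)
The whole statement reduces to showing that $F$ is a local diffeomorphism at $(x_0,\omega_0)$, so the plan is to set things up for the inverse function theorem and then read off the maps $x_i$. First I would establish that $F$ is smooth. At each reflection point $x_1,\dots,x_{k-1}$ the incoming geodesic segment meets $\partial K$ transversally, and at $x_k=y_0$ it meets $\partial S$ transversally (for the trajectories relevant here this non-tangency is exactly \Cref{thm:tangency}). Hence, if $\rho$ is a local defining function for the appropriate boundary near $x_i$, the scalar function $s\mapsto\rho\big(\pi_1\genflow_s(x,\omega)\big)$ vanishes with non-zero derivative at the hit, and the implicit function theorem produces a smooth first-hit time $t_i(x,\omega)$, hence a smooth first-hit point. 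Since $\genflow_s$ is smooth and the reflection law across $\partial K$ is smooth, composing the successive first-hit maps, reflections, and geodesic segments shows simultaneously that $\mathcal P_K$ is smooth near $(x_0,\omega_0)$ and that each intermediate reflection point is a smooth function $\tilde x_i(x,\omega)$ of the initial data with $\tilde x_i(x_0,\omega_0)=x_i$.

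Next I would differentiate. Because $F(x,\omega)=(x,y(x,\omega))$ with $y=\pi_1\circ\mathcal P_K$, the differential $dF$ is block-triangular with the identity as its $(x,x)$-block, so $F$ is a local diffeomorphism exactly when $\partial y/\partial\omega\neq0$; as $\omega$ ranges over the one-dimensional fibre of $\T S$ and $y$ over the curve $\partial S$, this is a single scalar condition. I would evaluate it variationally: rotating the initial direction yields a variation of the broken geodesic whose variation field is the normal Jacobi field $J(t)=b(t)e(t)$ on each segment, with $b''+\kappa b=0$ ($\kappa$ the Gaussian curvature), $b(0)=0$, $b'(0)\neq0$, and the standard billiard jump relations for $(b,b')$ at each $x_i$ (built from the geodesic curvature of $\partial K$ and the incidence angle). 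Projecting $J$ at the endpoint onto $T_{y_0}\partial S$ along $\dot\gamma$ shows $\partial y/\partial\omega\neq0$ precisely when $J$ is non-zero there; equivalently, in Birkhoff coordinates $(s,p)$ one has the classical identity $\partial s_1/\partial p_0=-1/L_{s_0s_1}$, so the condition is that the mixed partial $L_{s_0s_1}$ of the length (generating) function be finite and non-zero.

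The main obstacle is exactly this non-vanishing. Transversality keeps $L_{s_0s_1}$ finite, but $L_{s_0s_1}=0$ would mean $x_0$ and $y_0$ are conjugate along the broken geodesic, at which point $F$ really does degenerate; so the heart of the proof is to rule out such a conjugate pair. I would obtain this from the non-degeneracy intrinsic to the family of trajectories under consideration, and, where the curvature/convexity hypotheses of the paper apply, from the dispersing behaviour of $b$ (in non-positive curvature a solution of $b''+\kappa b=0$ with $b(0)=0$, $b'(0)\neq0$ cannot return to $0$, and the convex reflection jumps only reinforce this). Granting the non-vanishing, the inverse function theorem yields neighbourhoods $V\ni(x_0,\omega_0)$ and $W\ni(x_0,y_0)$ with $F:V\to W$ a diffeomorphism; shrinking $V$ so that every $\tilde x_i(V)$ lies in a fixed neighbourhood $U_i$ of $x_i$, I would finally set $x_i:=\tilde x_i\circ F^{-1}:W\to U_i$. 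These are smooth with $x_i(x_0,y_0)=x_i$, and they are the unique such maps because injectivity of $F$ determines the initial direction — and hence the entire trajectory and all its reflection points — from the endpoint pair $(x,y)\in W$.
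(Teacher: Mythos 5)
Your reduction is the right skeleton: smoothness of the first-hit times and reflection points via transversality and the implicit function theorem, then the observation that $F(x,\omega)=(x,y(x,\omega))$ has block-triangular differential, so everything hinges on the single scalar condition $\partial y/\partial\omega\neq 0$, i.e.\ on $y_0$ not being conjugate to $x_0$ along the broken geodesic. But this is also exactly where your argument stops being a proof. The lemma is stated in the Preliminaries --- and is invoked in the proof of \Cref{thm:flowequivalency} --- for an \emph{arbitrary} non-trapped generalised geodesic in the exterior of an \emph{arbitrary} obstacle $K$: no convexity of $\partial K$ and no curvature assumption on $M$ are available at this point; those hypotheses enter only later, in \Cref{thm:uniqueness}. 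Of your two proposed sources for the non-vanishing, (a) ``non-degeneracy intrinsic to the family of trajectories under consideration'' is circular, since it assumes precisely what has to be proved, and (b) the dispersing estimate for $b''+\kappa b=0$ together with convex reflection jumps is correct but only applies under the non-positive-curvature and strict-convexity hypotheses of \Cref{thm:conprop,thm:genconprop}, which this lemma does not have. So the central step is left unproved in the generality in which the lemma is stated and used. Your own remark that $F$ ``really does degenerate'' at a conjugate pair shows the issue is not cosmetic: a concave arc of $\partial K$ focusing a pencil of trajectories emanating from $x_0\in\partial S$ onto a point of $\partial S$ (an elliptical mirror with foci on $\partial S$ is the extreme case) is exactly the configuration your argument cannot exclude, and at such a configuration the asserted diffeomorphism genuinely fails.

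For comparison, the paper gives no internal proof of this statement at all; it refers the reader to the Euclidean-case paper \cite{MR3351977}. So there is no argument of the paper to measure yours against, but that makes the point sharper: a self-contained proof in this Riemannian setting has to confront the conjugate-point problem head on, either by building a non-degeneracy or dispersing hypothesis into the statement (consistent with how it is applied), or by a genericity argument showing that in the proof of \Cref{thm:flowequivalency} one may always perturb $(x,\omega)$ to avoid degenerate configurations. Your proposal, as written, does neither, so it does not yet prove the statement.
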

 See e.g. \cite{MR3351977} for a proof.

\section{The Conjugacy Theorem}\label{sec:conjugacy}
In this section we first prove two useful results, \Cref{thm:timeangle} and \Cref{thm:flowequivalency}. Then we finally give a proof of \Cref{thm:conjugacy}.
\begin{lemma}\label{thm:timeangle}
	Suppose that $\gamma$ is a non-trapped generalised geodesic in $S_K$ from $x\in \partial S$ to $y\in\partial S$. Then $grad_x T = -\dot\gamma(t_0)/\norm{\dot\gamma(t_0)}$, where $T(x,y)$ is the length of the geodesic $\gamma$.
\end{lemma}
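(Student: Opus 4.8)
The plan is to derive the formula from the first variation of arc length along a family of generalised geodesics whose initial point $x$ is allowed to move while the terminal point $y$ stays fixed. Regard $T(\cdot,y)$ as the length of the generalised geodesic joining a nearby point to $y$; by \Cref{thm:xydependance} the reflection points $x_i(x',y)$ depend smoothly on the endpoints, so this broken geodesic, and hence $T$, is differentiable near $x$, and every tangent direction $V\in T_xM$ is realised by a genuine variation $s\mapsto\gamma_s$ through generalised geodesics with $\gamma_0=\gamma$, variation field $\partial_s\gamma_s|_{s=0}$ equal to $V$ at the initial point and to $0$ at $y$. Normalise the parametrisation so that $\gamma(t_0)=x$, with $\dot\gamma(t_0)$ the outgoing velocity. (\Cref{thm:tangency} guarantees that for the geodesics under consideration the reflections are transversal, so the smooth reflection structure of \Cref{thm:xydependance} is genuinely available.)

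I would then split the length integral over the smooth geodesic segments between consecutive reflection times $t_0<t_1<\dots<t_k$ (with $\gamma(t_k)=y$) and apply the classical first variation formula segment by segment:
\[
\frac{d}{ds}\,L(\gamma_s)\Big|_{s=0} = \sum_{i=1}^{k}\Big[\langle V,\dot\gamma\rangle\Big]_{t_{i-1}^+}^{t_i^-} - \int \langle V,\nabla_{\dot\gamma}\dot\gamma\rangle\,dt .
\]
The bulk integral vanishes because each segment is a unit-speed geodesic, so only the endpoint pairings $\langle V,\dot\gamma\rangle$ survive.

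The crux is to show that the interior endpoint contributions telescope to zero. At a reflection point $t_i$ the two adjacent segments contribute $\langle V(t_i),\dot\gamma(t_i^-)\rangle-\langle V(t_i),\dot\gamma(t_i^+)\rangle=\langle V(t_i),\dot\gamma(t_i^-)-\dot\gamma(t_i^+)\rangle$. The symmetry law \cref{eq:symmetrytangent} forces $\dot\gamma(t_i^-)-\dot\gamma(t_i^+)$ to be normal to $\partial K$, while the smoothness of $x_i$ keeps the moving reflection point on $\partial K$, so $V(t_i)$ is tangent to $\partial K$; hence each interior term vanishes. The only remaining contributions are at the genuine endpoints: the variation field is $0$ at $y$, and at $x$ we obtain $-\langle V,\dot\gamma(t_0)\rangle$. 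Thus $\frac{d}{ds}T|_{s=0}=-\langle V,\dot\gamma(t_0)\rangle$ for every $V$, which is precisely the assertion that $grad_x T = -\dot\gamma(t_0)/\norm{\dot\gamma(t_0)}$.

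The main obstacle I expect is the bookkeeping at the reflection points: one must justify both that $T$ is differentiable at all and that the variation field is truly tangent to $\partial K$ there, rather than picking up a transverse component from the sliding of the reflection point. Both rest entirely on the smooth dependence of the reflection points supplied by \Cref{thm:xydependance}; once that is in hand, the remainder is the standard first variation computation.
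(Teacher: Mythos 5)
Your proposal is correct and follows essentially the same route as the paper's proof: a first-variation-of-arc-length computation over the smooth geodesic segments, where the bulk integrals vanish because each segment is a geodesic and the interior boundary terms cancel because the reflection law \cref{eq:symmetrytangent} makes $\dot\gamma(t_i^-)-\dot\gamma(t_i^+)$ normal to $\partial K$ while the variation field at a reflection point is tangent to $\partial K$. The only cosmetic difference is that the paper tracks the varying break times $t_i(h)$ explicitly and shows their contributions sum to zero, whereas you absorb this bookkeeping into the classical first variation formula.
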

\begin{proof}
	Suppose $\gamma:[a,b]\to S_K$ is split into geodesic segments $\gamma|_{[t_{i-1},t_i]}$ for \[a = t_0<t_1<\dots<t_{n-1}<t_n = b.\] Let $\gamma_h$ be any variation of $\gamma$ fixing the endpoint, $\gamma_h(t_n) = \gamma(t_n)= y$. Reparameterise each $\gamma_h$ so that $t_n(h) = b$ for all $h$.  Consider the derivative of the travelling time function $T_h=\int_{t_0}^{t_n} \norm{\dot\gamma_h}\ dt$, as follows:

	\begin{align}
		\pdiff{h}T_h &= \pdiff{h}\sum_{i=1}^n\int_{t_i-1}^{t_i}\norm{\dot\gamma_h}\ dt\\
		&= \sum_{i=1}^n\left(\norm{\dot\gamma_h(t_i)}\pdiff{h}t_i-\norm{\dot\gamma_h(t_{i-1})}\pdiff{h}t_{i-1} + \int_{t_i-1}^{t_i}\pdiff{h}\norm{\dot\gamma_h} \right)\label{eq:fulltimesum}
	\end{align}
	Then consider the following sum from \cref{eq:fulltimesum}
	\begin{equation}\label{eq:timeparts}
		\sum_{i=1}^n\left(\norm{\dot\gamma_h(t_i)}\pdiff{h}t_i-\norm{\dot\gamma_h(t_{i-1})}\pdiff{h}t_{i-1}\right)
	\end{equation}
	Since $\norm{\dot\gamma_h}$ is constant with respect to $t$, and \[\pdiff{h}t_0=\pdiff{h}t_n=0,\] \cref{eq:timeparts} sums to 0 as well. Thus we can continue from \cref{eq:fulltimesum} as follows
	\begin{align}
		\pdiff{h}T_h&=\sum_{i=1}^n\int_{t_{i-1}}^{t_i}\frac{1}{\norm{\dot\gamma_h}}\left<\pdiff{h}\dot\gamma_h,\dot\gamma_h\right>\ dt \\
		&= \frac{1}{\norm{\dot\gamma_h}}\sum_{i=1}^n\left<\pdiff{h}\gamma_h,\dot\gamma_h\right>\vbar^{t_i}_{t_{i-1}} -\int_{t_{i-1}}^{t_i}\left<\pdiff{h	}\gamma_h,\cdiff{t}\dot\gamma_h\right> \ dt\label{eq:finalsum}
	\end{align}
	Evaluating at $h=0$, the first terms cancel since $\gamma(t_i)$ satisfy \cref{eq:symmetrytangent} for all $1\leq i<n$, and $v\in T_{\gamma(t_i)}\partial K$. The second terms are 0 since $\gamma$ is a geodesic on each segment. Thus we find
	\begin{equation}
		\pdiff{h} T_h|_{h=0}=-\left<\frac{\dot\gamma(t_0)}{\norm{\dot\gamma(t_0)}},\pdiff{h}\gamma_h|_{h=0}\right>
	\end{equation}
	For any variation which keeps the endpoint fixed. That is,  
	\begin{equation}
		grad_x T = -\dot\gamma(t_0)/\norm{\dot\gamma(t_0)}.
	\end{equation}
\end{proof}

\begin{theorem}\label{thm:flowequivalency}
	Suppose $K$ and $L$ are two obstacles with almost the same travelling times. Let $\genflow^{(K)}_t$ and $\genflow^{(L)}_t$ be their flows. Then $\genflow^{(K)}_t = \genflow^{(L)}_t$ on $\T{S}$.
	That is, if $(x,\omega)\in \T{S}$ and $t_0\in\reals$ are such that $\genflow^{(K)}_{t_0}(x,\omega)\in \T{S}$ then
	\begin{equation}
		\genflow^{(K)}_{t_0}(x,\omega) = \genflow^{(L)}_{t_0}(x,\omega)
	\end{equation}
\end{theorem}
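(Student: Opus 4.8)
The plan is to read off the initial and final directions of a connecting geodesic from the gradient of the travelling-time function via \Cref{thm:timeangle}, and then to use the almost-everywhere equality $T_K=T_L$, together with a Lebesgue density argument, to force these gradients -- and hence the two flows -- to agree. Write $B=\{(x,\omega)\in\T S:\genflow^{(K)}_{t_0}(x,\omega)\in\T S\}$. I would first establish the identity on a full-measure subset of $B$ and then extend it to all of $B$ by continuity of the flows. Fix a generic $(x_0,\omega_0)\in B$: the $K$-geodesic $\gamma^{(K)}$ it generates is non-trapped, reaches $y_0:=\pi_1\genflow^{(K)}_{t_0}(x_0,\omega_0)\in\partial S$ at time $t_0$, is nowhere tangent to $\partial S_K$, and is the unique $K$-geodesic from $x_0$ to $y_0$ of length $t_0$; by \Cref{thm:tangency,thm:trappedboundary,thm:distincttimes} the excluded data form a set of measure zero.

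For such data, \Cref{thm:xydependance} furnishes a neighbourhood $W$ of $(x_0,y_0)$ in $\partial S\times\partial S$ on which the relevant branch of the travelling time is a single smooth function $\tau_K\colon W\to\reals$, and on which the endpoint map $(x,\omega)\mapsto(x,y)$ is a diffeomorphism. \Cref{thm:timeangle} gives $grad_x\tau_K(x,y)=-\dot\gamma^{(K)}(0)$, and the same computation with the endpoints interchanged gives $grad_y\tau_K(x,y)=\dot\gamma^{(K)}(t_0)$; thus the full differential of $\tau_K$ at $(x,y)$ encodes \emph{both} the initial direction and the final direction of the geodesic. Since $\tau_K(x,y)\in T_K(x,y)=T_L(x,y)$ for almost every $(x,y)\in W$, almost every value of $\tau_K$ is the length of an $L$-geodesic joining the same pair of points. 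After shrinking $W$ and restricting to travelling times near $t_0$, a compactness argument bounding the number of reflections shows that these $L$-lengths are organised into finitely many smooth branches $\sigma_1,\dots,\sigma_m$ (again via \Cref{thm:xydependance}). Hence the closed sets $Z_i=\{(x,y)\in W:\tau_K(x,y)=\sigma_i(x,y)\}$ cover $W$ up to a null set, so at least one $Z_i$ has positive measure.

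Now I would invoke the Lebesgue density theorem. The function $f_i=\tau_K-\sigma_i$ is $C^1$ and vanishes on $Z_i$; at any density point of $Z_i$ the zero set of $f_i$ has full density, so $f_i$ cannot have non-vanishing gradient there (else its zero set would be a smooth curve of density zero), forcing $grad f_i=0$, i.e.\ $grad_x\tau_K=grad_x\sigma_i$ and $grad_y\tau_K=grad_y\sigma_i$. Almost every point of $Z_i$ is such a density point, so for almost every $(x,y)\in Z_i$ the $L$-geodesic realising $\sigma_i$ leaves $x$ with direction $-grad_x\sigma_i=-grad_x\tau_K=\dot\gamma^{(K)}(0)$ and arrives at $y$ with direction $grad_y\sigma_i=grad_y\tau_K=\dot\gamma^{(K)}(t_0)$ after time $\sigma_i(x,y)=\tau_K(x,y)=t_0$. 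By uniqueness of generalised geodesics with prescribed initial data, this $L$-geodesic is exactly the one generated by $(x,\dot\gamma^{(K)}(0))$, so $\genflow^{(L)}_{t_0}=\genflow^{(K)}_{t_0}$ at the corresponding point of $\T S$. Pulling back through the diffeomorphism $(x,\omega)\mapsto(x,y)$, which carries null sets to null sets, the identity holds for almost every $(x,\omega)\in B$.

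Finally, the generic subset is dense in $B$ and both flows are continuous at non-tangential points, so the established equality propagates to all of $B$, completing the proof. I expect the branch matching of the second and third paragraphs to be the crux: the hypothesis supplies only equality of the \emph{sets} $T_K(x,y)$ and $T_L(x,y)$ for almost all $(x,y)$, and the delicate point is to convert this into equality of the \emph{differentials} of a single smooth branch, rather than allowing $\tau_K$ to hop between different $L$-branches on a set whose complement is merely null. The Lebesgue density theorem, combined with the local finiteness of branches from \Cref{thm:xydependance} and the uniqueness furnished by \Cref{thm:distincttimes}, is what makes this step work; carefully verifying the compactness bound on the number of nearby branches is the main technical loose end.
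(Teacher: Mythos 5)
Your skeleton is the paper's: reduce to the generic set supplied by \Cref{thm:tangency,thm:distincttimes,thm:trappedboundary}, localise with \Cref{thm:xydependance}, read off the incoming and outgoing directions from the differential of the travelling-time function via \Cref{thm:timeangle}, and finish by density and continuity of the flows. You have also diagnosed the crux correctly --- converting set-equality $T_K=T_L$ into equality of a single smooth branch --- but that crux is exactly what your proposal leaves unproved, and the mechanism you reach for does not close it. A ``compactness argument bounding the number of reflections'' is not the issue: the obstruction is that the $L$-geodesics realising $\tau_K(x,y)$ for $(x,y)\in R\cap W$ could a priori accumulate on a geodesic \emph{tangent} to $\partial L$ (around which \Cref{thm:xydependance} provides no smooth branch at all), or hop among infinitely many distinct branches; compactness of $\T S$ alone only hands you a limiting direction, with no control on what it generates. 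The paper closes this by a sequence/contradiction argument that uses the genericity of the base pair in an essential way: since $(x_0,y_0)\in R$, the standing reductions via \Cref{thm:tangency} and \Cref{thm:distincttimes} guarantee that any limit of such $L$-geodesics is a non-tangent $(x_0,y_0)$-geodesic \emph{uniquely determined by its travelling time}, hence equal to the fixed geodesic $\gamma_L$ with $t_L=t_K$; the uniqueness clause of \Cref{thm:xydependance} then forces the approximating geodesics to lie in the $\gamma_L$-branch for large $j$, a contradiction. That argument, not a reflection count, is the missing content.

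Note also that once this step is supplied, your measure-theoretic superstructure becomes unnecessary: the argument yields exactly one relevant branch ($m=1$), i.e. $\tau_K=t_L$ on the dense set $R\cap W$, hence on all of $W$ since both functions are continuous; the gradients then agree at $(x_0,y_0)$ itself and \Cref{thm:timeangle} finishes the proof with no appeal to Lebesgue density points. (Separately, your density argument as written has a slip: you select one $Z_i$ of positive measure but then assert the identity almost everywhere on $B$; you would need to run the density-point argument on every $Z_i$ and take the union over the cover. That is repairable.) In short: right architecture, correct identification of the delicate point, but the delicate point is essentially the whole theorem, and filling it in the only way available --- via \Cref{thm:tangency,thm:distincttimes,thm:xydependance} and the limit argument --- collapses your proof into the paper's.
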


\begin{proof}
	Since $K$ and $L$ have almost the same travelling times, there is a subset $R\subseteq \partial S\times \partial S$ such that 
	\begin{equation}
 		T_K(x,y) = T_L(x,y)\text{ for all }(x,y)\in R
 	\end{equation}
 	And $(\partial S\times \partial S)\backslash R$ is a set of measure 0. By using \Cref{thm:tangency} we can assume that if $\gamma$ is an $(x,y)$-geodesic with $(x,y)\in R$ then $\gamma$ is not tangent to $K$ or $L$ at any point. Furthermore, by \Cref{thm:distincttimes} we can assume that if $\gamma$ and $\delta$ are $(x,y)$-geodesics with $(x,y)\in R$ then $\gamma$ and $\delta$ either have distinct travelling times, $t_\gamma\neq t_\delta$,  or $\gamma = \delta$.
 	
 	Let 
 	\begin{equation}
 		\Omega = \T{S}\backslash (Trap^\partial(S_K)\cup Trap^\partial (S_L))
 	\end{equation}
 	That is, the set of points along the boundary $\partial S$ which generate non trapped geodesics in both $S_K$ and $S_L$. By \Cref{thm:trappedboundary}, $\Omega$ is dense in $\T{S}$, and since $\genflow^{(K)}_t$ and $\genflow^{(L)}_t$ are continuous it suffices to show that they are equivalent on $\Omega$. So pick any $(\widetilde x_0,\widetilde \omega_0)\in \Omega$ and let $t_0$ be the travelling time of the generalised geodesic in $K$ generated by $(\widetilde x_0,\widetilde \omega_0)$. Also, let $(\widetilde y_0,\widetilde \rho_0) = \genflow^{(K)}_{t_0}(\widetilde x_0,\widetilde \omega_0).$ Note that $(\widetilde x_0,\widetilde y_0)$ might not be in $R$. However, by \Cref{thm:xydependance} there is a diffeomorphism $F:V\to W$ between neighbourhoods of $(\widetilde x_0,\widetilde y_0)$ and $(\widetilde x_0,\widetilde \omega_0)$ respectively. Since $R$ is dense in $\partial S\times \partial S$, we can find $(x_0,\omega_0)$ arbitrarily close to $(\widetilde x_0,\widetilde \omega_0)$ such that $(x_0,y_0) = F(x_0,\omega_0)$ is in $R$.
 	
 	We now proceed to show that any $(x_0,y_0)$-geodesic in  $K$ and $L$ with the same travelling time will have the same incoming and outgoing angles $\omega_0$ and $\rho_0$. Therefore, in the limit we will have \[\genflow^{(K)}_{t_0}(\widetilde x_0,\widetilde \omega_0) = \genflow^{(L)}_{t_0}(\widetilde x_0,\widetilde \omega_0).\]
 	Let $\gamma_K$ be the geodesic generated by $(x_0,\omega_0)$ in $S_K$. Let $t_K$ be its travelling time. Now since $T_K(x_0,y_0)=T_L(x_0,y_0)$, there is an $(x_0,y_0)$-geodesic $\gamma_L$ in $L$ with the same travelling time $t_L = t_K$.
 	To show that $\gamma_L$ has the same incoming and outgoing directions as $\gamma_K$ we make use of \Cref{thm:xydependance}. This gives us neighbourhoods $W$ of $(x_0,y_0)$ and $U_i^{(K)}$ of $x_i^{(K)}$, the reflection points of $\gamma_K$. We also then have the diffeomorphism $F:V\to W$ as before, and unique smooth maps $x_i^{(K)}:W\to U_i^{(K)}$ such that $x_i^{(K)}(x,y)$ is the $i^{\text{th}}$ reflection point of the generalised geodesic $\gamma_K(x,y)$ in $S_K$ generated by ${F^{-1}(x,y)\in V}$. Similarly we have neighbourhoods $U_i^{(L)}$ and maps $x_i^{(L)}:W\to U_i^{(L)}$ for the generalised geodesics $\gamma_L(x,y)$ in $S_L$. Note that $\gamma_K(x_0,y_0)=\gamma_K$ by definition and similarly $\gamma_L(x_0,y_0)=\gamma_L$. It's important to remember that $\gamma_L$ and $\gamma_K$ are possibly not the same generalised geodesic.
 	
 	We want to show that if we shrink $W$ enough, then $\gamma_K(x,y)$ and $\gamma_L(x,y)$ will have the same travelling times for all $(x,y)\in R\cap W$. Let $t_K(x,y)$ and $t_L(x,y)$ be the travelling times of $\gamma_K(x,y)$ and $\gamma_L(x,y)$ respectively. Once again, for every $(x,y)\in R\cap W$ we have
 	\begin{equation}\label{eq:timeequality}
 		T_K(x,y)=T_L(x,y)
 	\end{equation} 
 	So for each $(x,y)\in R\cap W$ there exists a unique $(x,y)$-geodesic $\widetilde \gamma_L(x,y)$ in $S_L$ such that its travelling time 
 	\begin{equation}\label{eq:timeequality2}
 		\widetilde t_L(x,y) = t_K(x,y).
 	\end{equation} 
 	Now we will find that $\gamma_L(x,y) = \widetilde\gamma_L(x,y)$ for all $(x,y)\in R\cap W$ provided we make $W$ small enough. Suppose that is not the case. Let \[W = W_1 \supset W_2 \supset \cdots\] be a decreasing sequence of neighbourhoods of $(x_0,y_0)$. For each $j$ there is some $(x_j,y_j)\in R\cap W_j$ such that $\gamma_L(x_j,y_j) \neq \widetilde\gamma_L(x_j,y_j)$. This gives us a sequence $\{(x_j,y_j)\}_{j=1}^\infty$ converging to $(x_0,y_0)$ as $j\to \infty$. For each $j$ let $\omega_j$ be \[\omega_j = \pi_2\circ F^{-1}(x_j,y_j),\] namely, the initial direction of $\widetilde\gamma_L(x_j,y_j)$. This defines another sequence $\{\omega_j\}_{j=1}^\infty$. Since $S$ is compact there is a subsequence $\{\omega_{j_k}\}_{k=1}^\infty$ which converges to some $\omega\in \T S$. Now let $\widetilde\gamma_L$ be the generalised geodesic in $S_L$ generated by $(x_0,\omega)$. Then by definition $\widetilde\gamma_L$ is an $(x_0,y_0)$-geodesic in $S_L$. Let $\widetilde t_L$ be the travelling time of $\widetilde\gamma_L$, then \[\widetilde t_L = \lim_{j\to\infty}\widetilde t_L(x_j,y_j) = \widetilde t_L(x_0,y_0) = t_K.\]
 	Here we used \cref{eq:timeequality2} for the last equality. Now by \cref{eq:timeequality} we conclude that $\widetilde t_L = t_L$. But distinct $(x,y)$-geodesics have unique travelling times for all $(x,y)\in R$, and since $\gamma_L$ and $\widetilde\gamma_L$ are both $(x_0,y_0)$-geodesics with the same travelling time, they must be the same geodesic. But then applying \Cref{thm:xydependance} to $\widetilde\gamma_L$ implies that for large enough $j$, we have $\widetilde\gamma_L(x_j,y_j)=\gamma_L(x_j,y_j)$, by uniqueness of the maps $x_i^{(L)}$. This is a contradiction to our choice of sequence $\{(x_j,y_j)\}$, so we get $\widetilde\gamma_L(x,y) = \gamma_L(x,y)$ for all $(x,y)\in R\cap W$. Therefore we know that \[t_L(x,y) = \widetilde t_L(x,y) = t_K(x,y),\] for all $(x,y)\in R\cap W$. Now $t_L(x,y)$ and $t_K(x,y)$ are continuous maps which agree on the set $R\cap W$, which is dense in $W$, so they must agree on the whole of $W$.
 	
 	We have therefore shown that in a neighbourhood of $(x_0,y_0)$, the travelling times of the geodesics $\gamma_K(x,y)$ and $\gamma_L(x,y)$ agree. By \Cref{thm:timeangle}, this must mean that the incoming and outgoing directions of $\gamma_L$ and $\gamma_K$ are the same. Therefore \[\genflow^{(K)}_{t_K}(x_0, \omega_0) = \genflow^{(L)}_{t_L}(x_0,\omega_0).\]
 	So as $(x_0,\omega_0)$ tends to $(\widetilde x_0,\widetilde\omega_0)$, and therefore $t_K = t_L$ tends towards $t_0$ we get
 	\[\genflow^{(K)}_{t_0}(\widetilde x_0,\widetilde \omega_0) = \genflow^{(L)}_{t_0}(\widetilde x_0,\widetilde \omega_0),\]
 	for all $(\widetilde x_0,\widetilde\omega_0)\in \Omega$ as promised.
\end{proof}

The key idea of the following proof is that generalised geodesics generated by points in the phase space $\oT S_K\backslash Trap(S_K)$ have locally bounded travelling times. The other thing of note is that by \Cref{thm:flowequivalency}, the generalised geodesic flows of the two obstacles are identical everywhere except the interior of $S$. Using these two ideas allows us to define the homeomorphism $\Phi$.
\begin{proof}[Proof of \Cref{thm:conjugacy}]
	First, suppose $K$ and $L$ have almost the same travelling times. Let $\Omega_K = \oT{S_K}\backslash Trap(S_K)$, and define $\Omega_L$ similarly. For every point $(x,\omega)\in\Omega_K$ there is a $t(x,\omega)>0$ such that $\genflow^{(K)}_{t(x,\omega)}\in \partial S$.
	
	We claim that for each point there is a neighbourhood $U_{(x,\omega)}$ on which $t(x',\omega')$ is bounded for all $(x',\omega')\in U_{(x,\omega)}$. Suppose $(x,\omega)\in\Omega_K$ does not have such a neighbourhood. Then we can pick a converging sequence $\{(x_n,\omega_n)\}_{i=1}^\infty\to (x,\omega)$ in $\Omega_K$, such that $t(x_n,\omega_n)>t(x_{n-1},\omega_{n-1})$ and there is no $A\in\reals$ which satisfies
	\begin{equation}\label{eq:timeunbound}
		t(x_n,\omega_n)<A\text{ for all } n=1,2,\dots
	\end{equation}
	Let $t=t(x,\omega)$, then since $\genflow^{(K)}_{t}$ is continuous we have \[\{\genflow^{(K)}_{t}(x_n,\omega_n)\}_{i=1}^\infty\to \genflow^{(K)}_{t}(x,\omega).\]
	Now pick $\varepsilon>0$ small enough so that $B_{\varepsilon}(\genflow^{(K)}_{t}(x,\omega))\subseteq \uT M$ doesn't intersect $\uT K$. Then there is some $N\in\mathbb{Z}^+$ such that for all $n>N$ we have
	\[\genflow^{(K)}_{t}(x_n,\omega_n)\in B_{\varepsilon}(\genflow^{(K)}_{t}(x,\omega)).\]
	But then $t(x_n,\omega_n)<\varepsilon+t$ for all $n>N$, contradicting \cref{eq:timeunbound}. This proves our claim.
	
	We can now define the desired homeomorphism $\Phi:\Omega_K\to\Omega_L$ locally, as follows. For each $(x_0,\omega_0)\in\Omega_K$ there is a neighbourhood $U$ on which the travelling time is bounded. Let $t$ be that bound. Then define
	\begin{equation}
		\Phi|_{U}(x,\omega) = \genflow^{(L)}_{-t}\circ \genflow^{(K)}_{t}(x,\omega)
	\end{equation}
	Note that even $U$ and $V$ are two neighbourhoods of $(x_0,\omega_0)$ with different bounds $t \neq t'$, the map $\Phi$ is still well defined. Since the travelling time to the boundary is unique for each $(x,\omega)$ and by \Cref{thm:flowequivalency} the flows are identical beyond the boundary. Therefore $\Phi$ is continuous, and its inverse is defined analogously, making it a homeomorphism.
	
	Indeed, by definition, $\Phi$ is the identity map on the boundary of $S$. We show that $\Phi$ satisfies \cref{eq:conjugacy}. Given $t_0\in\reals$ and $(x,\omega)\in\Omega_K$ we have some upper bound $t$ and a neighbourhood $U$ for this bound. Then
	\begin{equation}
		\genflow^{(L)}_{t_0} \circ \Phi(x,\omega) = \genflow^{(L)}_{t_0}\circ \genflow^{(L)}_{-t}\circ \genflow^{(K)}_{t}(x,\omega)
	\end{equation}
	We use the fact that $\genflow^{(L)}$ is a flow to give
	\begin{equation}\label{eq:conj1}
		\genflow^{(L)}_{t_0} \circ \Phi(x,\omega) = \genflow^{(L)}_{-t}\circ \genflow^{(L)}_{t_0}\circ \genflow^{(K)}_{t}(x,\omega)
	\end{equation}
	Note that $\genflow^{(K)}_{t}(x,\omega)\in \overline{M\backslash S}$, and by \Cref{thm:flowequivalency} the two flows are equivalent there. So
	\begin{equation}\label{eq:conj2}
		\genflow^{(L)}_{t_0}\circ \genflow^{(K)}_{t}(x,\omega) = \genflow^{(K)}_{t_0}\circ \genflow^{(K)}_{t}(x,\omega) = \genflow^{(K)}_{t}\circ \genflow^{(K)}_{t_0}(x,\omega)
	\end{equation}
	Then combining \cref{eq:conj1} and \cref{eq:conj2} we get
	\begin{equation}
		\genflow^{(L)}_{t_0} \circ \Phi(x,\omega) = \genflow^{(L)}_{-t}\circ\genflow^{(K)}_{t}\circ \genflow^{(K)}_{t_0}(x,\omega) = \Phi\circ \genflow^{(K)}_{t_0}(x,\omega)
	\end{equation}
	Note that $\Phi$ maps non-trapped points in $\oT{S_K}$ to non-trapped points in $\oT{S_L}$. This follows from \cref{eq:conjugacy}, as follows. Given $(x,\omega)\in \Omega_K$ there is a $t_0<0$ such that $\genflow^{(K)}_{t_0}(x,\omega)\in \partial S$. Then since $\Phi$ is the identity on the boundary, by \cref{eq:conjugacy} we have \[\genflow^{(L)}_{t_0}\circ\Phi(x,\omega) = \genflow^{(K)}_{t_0}(x,\omega)\in\partial S.\]
	So $\Phi(x,\omega)$ is not trapped in $\oT{S_L}$. Therefore $K$ and $L$ have conjugate flows.
	
	Now to prove the implication in the other direction, suppose that $K$ and $L$ have conjugate flows. We show that they have almost the same travelling times. Let  $\mathcal T_K$ and $\mathcal T_L$ be the sets of travelling times in $S_K$ and $S_L$ respectively. Given any $(x,y,t)\in\mathcal T_K$, there exists an $(x,y)$-geodesic $\gamma_K$ in $S_K$ with travelling time $t$. Then $\gamma_K$ is generated by some $(x,\omega)\in \T{S}\cap \Omega_K$.
	Now, since $\Phi|_{\T{S}} = \text{Id}$ we have
	\begin{equation}
		\genflow^{(L)}_{t_0}(x,\omega) = \genflow^{(K)}_{t_0}(x,\omega)
	\end{equation}
	That is, the geodesic $\gamma_L$ in $S_L$ generated by $(x,\omega)$ has travelling time $t_0$ and is an $(x,y)$-geodesic. So we have $\mathcal T_K \subseteq \mathcal T_L$. The symmetric argument will show that $\mathcal T_L \subseteq \mathcal T_K$. Therefore $K$ and $L$ have the same travelling times.
\end{proof}

\section{Negative Curvature}\label{sec:negcurve}
The following four results have long technical proofs. For submanifolds of $\reals^n$ ($n \geq 2$) the following lemma was proved in \cite{MR1795959} (see Lemma 5.2 there). However in the case of Riemannian 2-manifolds considered here the proof is more complicated. It should be stressed that this proposition is of fundamental importance for the proof of \Cref{thm:uniqueness}. For dispersing billiards in Euclidean spaces \Cref{thm:conprop,thm:genconprop} have been well-known and widely used for a very long time - see the seminal paper of Sinai \cite{MR0274721}. It appears these propositions are known (as folklore) for the case of Riemannian 2-manifolds with negative sectional curvature as well. However we have not been able to find proofs in the literature, so we prove them in the next section for completeness.
\begin{proposition}
\label{thm:convexlemma}
	Suppose $c:[a,b]\to M$ is a smooth, unit speed, strictly convex curve. For each $u_0\in [a,b]$ there exists  a smooth, strictly convex curve $y$ on a neighbourhood of $u_0$ such that $\pdiff{u}{y}(u)$ is orthogonal to the parallel translate of $\pdiff{u}{c}(u)$. 
\end{proposition}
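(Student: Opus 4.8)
The plan is to build $y$ as the solution of a first-order ODE whose velocity field is, by construction, orthogonal to the transported tangent of $c$, and then to extract smoothness, regularity and strict convexity. I read ``the parallel translate of $\pdiff{u}c(u)$'' as $P_{c(u)\to y(u)}\dot c(u)$, the parallel transport of $\dot c(u)$ along the unique minimising geodesic from $c(u)$ to $y(u)$; this reduces to the condition $\dot y\perp\dot c$ in flat space. Fix $u_0$ and work in a totally normal (geodesically convex) neighbourhood $\mathcal U$ of $p:=c(u_0)$, on which $(q,q')\mapsto P_{q\to q'}$ is smooth. Let $J$ denote rotation by $\pi/2$ in the oriented tangent planes; as $M$ is an oriented Riemannian surface $J$ is parallel, hence commutes with $P$ and with $\cdiff{u}$.

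First I would set $X(u,q)=J\big(P_{c(u)\to q}\dot c(u)\big)$, a smooth time-dependent vector field on $\mathcal U$, and let $y$ solve $\dot y(u)=X(u,y(u))$ with $y(u_0)=p$. Picard--Lindel\"of gives a unique smooth solution on a neighbourhood of $u_0$, remaining in $\mathcal U$ for $u$ near $u_0$. By construction $\dot y(u)=J\big(P_{c(u)\to y(u)}\dot c(u)\big)$ is orthogonal to $P_{c(u)\to y(u)}\dot c(u)$, as required, and $\norm{\dot y(u)}=\norm{\dot c(u)}=1$ since $P$ and $J$ are isometries, so $y$ is automatically regular and unit speed.

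The hard part is strict convexity, and this is exactly where curvature enters and the argument departs from the Euclidean one. Writing $F(u)=P_{c(u)\to y(u)}\dot c(u)$, so that $\dot y=JF$ and $\cdiff{u}\dot y=J\,\cdiff{u}F$, I would compute the geodesic curvature $\kappa_y=\langle\cdiff{u}\dot y,N_y\rangle$ (with $N_y=J\dot y$) at $u_0$. The key step is to show $\cdiff{u}F(u_0)=\nabla_{\dot c}\dot c(u_0)$. To see this, let $\tau(u,\cdot)$ be the connecting geodesic from $c(u)$ to $y(u)$ and let $G(u,s)$ be the parallel transport of $\dot c(u)$ along $\tau(u,\cdot)$, so that $F(u)=G(u,1)$ and $\cdiff{s}G=0$. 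The commutation identity $\cdiff{s}\cdiff{u}G=R(\partial_s\tau,\partial_u\tau)G$, combined with the fact that the geodesic from $p$ to $p$ is constant, so $\partial_s\tau(u_0,\cdot)\equiv 0$, forces $\cdiff{s}\cdiff{u}G(u_0,\cdot)\equiv 0$; hence $\cdiff{u}F(u_0)=\cdiff{u}G(u_0,0)=\nabla_{\dot c}\dot c(u_0)$.

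Finally, since $c$ is strictly convex, $\nabla_{\dot c}\dot c(u_0)=\kappa_c(u_0)N(u_0)$ with $\kappa_c(u_0)\neq 0$ and $N=J\dot c$; substituting into $\kappa_y$ and using $P_{p\to p}=\mathrm{Id}$ and $J^2=-\mathrm{Id}$ yields $\kappa_y(u_0)=\kappa_c(u_0)\neq 0$. By smoothness of $\kappa_y$ this persists on a neighbourhood of $u_0$, so $y$ is strictly convex there, completing the construction. The only genuine subtlety beyond bookkeeping is the curvature commutation step, whose saving grace is that the connecting geodesic degenerates to a point at $u_0$, killing the curvature term precisely where it is evaluated.
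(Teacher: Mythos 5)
There is a genuine gap, and it lies in your reading of ``the parallel translate of $\pdiff{u}c(u)$''. In the paper's proof the translate is taken along the geodesic \emph{tangent} to $c$: one sets $y(u,t)=\pi_1\circ\flow_t\left(c(u),\Pdiff{c}{u}(u)\right)$ and seeks a curve of the form $\widetilde y(u)=y(u,r(u))$, so that $\widetilde y(u)$ \emph{lies on} the geodesic emanating from $c(u)$ with velocity $\Pdiff{c}{u}(u)$, and the vector it must be orthogonal to is that geodesic's own velocity $\Pdiff{y}{t}(u,r(u))$. This disambiguation is not optional: every later use of the proposition (in the proofs of \Cref{thm:confrontorth}, \Cref{thm:onetangent} and \Cref{thm:uniqueness}) needs precisely a convex front which the geodesics generated by the tangents of $\partial K$ \emph{hit orthogonally}, so the front must be an orthogonal trajectory of the tangent-geodesic congruence of $c$. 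The heart of the paper's argument is then to show, via the Jacobi equation and the symmetries of $R$, that $g(u,t)=\left<\Pdiff{y}{u},\Pdiff{y}{t}\right>$ is affine in $t$ and in fact $g\equiv 1$, whence the transversal parameter can be taken to be $r(u)=\lambda-u$ (an involute-type curve at positive distance $\lambda-u_0$ from $c$), followed by a Taylor-expansion check that this curve is strictly convex.

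Your construction produces a different object. You transport $\Pdiff{c}{u}(u)$ along the minimising geodesic joining $c(u)$ to $y(u)$ and define $y$ by the ODE $\dot y=J\bigl(P_{c(u)\to y(u)}\dot c(u)\bigr)$ with $y(u_0)=c(u_0)$. For $u\neq u_0$ the geodesic determined by $(c(u),\dot c(u))$ has no reason to pass through $y(u)$, so no statement about tangent geodesics of $c$ meeting $y$, let alone meeting it orthogonally, can be extracted. Already in $\reals^2$ with $c$ the unit circle your ODE gives $\dot y(u)=J\dot c(u)$, so $y$ is a translated unit circle through $c(u_0)$, and for $u\neq u_0$ the tangent line of $c$ at $c(u)$ does not pass through $y(u)$ at all; the curve the paper needs in this situation is instead the involute $c(u)+(\lambda-u)\dot c(u)$. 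Your internal computations are fine as far as they go --- the commutation argument exploiting the degeneration of the connecting geodesic at $u_0$ is correct and yields $\kappa_y(u_0)=\kappa_c(u_0)\neq 0$ --- but they prove a weaker, different statement that cannot support the applications in \Cref{sec:negcurve}, which is exactly why the paper's proof must solve for the section $r(u)$ of the tangent congruence rather than integrate a rotated, transported vector field.
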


\begin{proposition}\label{thm:conprop}
	Suppose $M$ has non-positive sectional curvature. Given a strictly convex, unit speed curve $x:[a,b]\to M$, let $\omega:[a,b]\to\uT{M}$ be the unit outward normal field to it. Define the map $Y:[a,b]\times\reals\to M$ by
	\begin{equation}\label{eq:propfamily}
		Y(u,t) = \flow_{t}(x(u),\omega(u))
	\end{equation}
	For each $t_0\in\reals$ the curve $y(u)=\pi_1\circ Y(u,t_0)$ is strictly convex.
\end{proposition}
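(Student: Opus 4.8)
The plan is to fix a parameter value $u_0\in[a,b]$ and study the variation of normal geodesics $\sigma_{u_0}(t) := Y(u_0,t) = \flow_t(x(u_0),\omega(u_0))$ through its Jacobi field. Since $Y$ is a variation through geodesics, the field $J(t):=\pdiff{u}Y(u_0,t)$ is a Jacobi field along $\sigma_{u_0}$, with $J(0)=\dot x(u_0)$ and $J'(0)=\cdiff{u}\omega(u_0)$ (using the symmetry $\cdiff{u}\pdiff{t}Y=\cdiff{t}\pdiff{u}Y$). First I would check that $J$ is everywhere orthogonal to $\sigma_{u_0}$: the tangential component $\langle J,\dot\sigma_{u_0}\rangle$ is affine in $t$, equals $\langle \dot x(u_0),\omega(u_0)\rangle=0$ at $t=0$ (as $\omega$ is normal to $x$), and has derivative $\langle \cdiff{u}\omega,\omega\rangle=\tfrac12\pdiff{u}\norm{\omega}^2=0$, so it vanishes identically. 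Hence on the oriented surface $M$ we may write $J(t)=f(t)\,E(t)$ with $E$ the parallel unit normal field along $\sigma_{u_0}$, and the Jacobi equation collapses to the scalar equation $f''+Kf=0$, where $K$ is the Gaussian (sectional) curvature along $\sigma_{u_0}$.

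Next I would record the initial data and translate convexity of $y$ into a condition on $f$. Choosing the orientation so that $E(0)=\dot x(u_0)$ gives $f(0)=1>0$, and differentiating $\langle \omega,\dot x\rangle=0$ shows $f'(0)=\langle \cdiff{u}\omega(u_0),\dot x(u_0)\rangle=\kappa_x(u_0)$, the geodesic curvature of $x$, which is strictly positive by hypothesis. Now $\nu:=\pdiff{t}Y(\cdot,t_0)$ is a unit field along $y$ orthogonal to $\dot y=J(t_0)$ (by the orthogonality just proved), so it serves as a unit normal, and the signed geodesic curvature of $y$ is $\kappa_y=\langle \cdiff{u}\dot y,\nu\rangle/\norm{\dot y}^2$. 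Differentiating the identity $\langle \pdiff{u}Y,\pdiff{t}Y\rangle\equiv 0$ in $u$ and using the symmetry of covariant derivatives yields $\langle \cdiff{u}\pdiff{u}Y,\pdiff{t}Y\rangle=-\langle J,J'\rangle=-ff'$, whence $\kappa_y(u_0)=-f'(t_0)/f(t_0)$. Thus $y$ is strictly convex near $u_0$ exactly when $f(t_0)\neq 0$ and $f'(t_0)/f(t_0)$ has constant positive sign.

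Finally I would exploit $K\le 0$. Where $f>0$ the scalar equation gives $f''=-Kf\ge 0$, so $f$ is convex; starting from $f(0)=1>0$ and $f'(0)=\kappa_x(u_0)>0$, the function $f$ is increasing and stays positive for all $t_0\ge 0$, with $f'(t_0)\ge f'(0)>0$. Hence $f'(t_0)/f(t_0)>0$ and $y$ is strictly convex (in the same sense as $x$, with $\nu$ the outward normal) for every $t_0\ge 0$ — the case needed when propagating the convex boundary into the exterior. I expect the main obstacle to be precisely this sign-and-regularity analysis: one must verify that $y$ remains an immersion, i.e. that $f$ does not vanish, and that $f'/f$ keeps a constant sign, before the formula $\kappa_y=-f'/f$ can be read as strict convexity. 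Non-positive curvature is exactly what rules out conjugate/focal points in the outward direction and forces $f$ to stay positive and increasing; the inward direction $t_0<0$ is more delicate, since a strictly convex curve can still develop a focal point (its ``centre'') where $f=0$ and $y$ degenerates, so the argument there requires restricting to $t_0$ before the first such time. The remaining work is bookkeeping the orientation conventions so that the common sign of $f$ and $f'$ matches the intended notion of strict convexity.
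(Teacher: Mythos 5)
Your argument is correct, and it is a genuinely different packaging of the same underlying ingredients as the paper's proof. The paper never reduces to the scalar Jacobi equation: it works directly with the convexity function $F(u,t)=\left<\cdiff{u}\Pdiff{y}{u},\Pdiff{y}{t}\right>$, differentiates it in $t$, and uses the symmetry of mixed covariant derivatives together with the curvature commutation identity to obtain $\pdiff{t}F=-\norm{\cdiff{u}\Pdiff{y}{t}}^2+\left<R\left(\Pdiff{y}{u},\Pdiff{y}{t}\right)\Pdiff{y}{u},\Pdiff{y}{t}\right>\leq 0$, so that $F(u,t)\leq F(u,0)<0$ for all $t\geq 0$; no parallel frame, no ODE analysis. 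In your notation $F=-ff'$, and the paper's inequality is precisely $\frac{d}{dt}(-ff')=-(f')^2+Kf^2\leq 0$, so the two computations are algebraically equivalent; the difference is in what is tracked. Your route (solving $f''+Kf=0$ with $f(0)=1$, $f'(0)=\kappa_x>0$, hence $f>0$ and $f'>0$ for all $t\geq 0$) buys three things the paper's proof does not make explicit: it shows the propagated front remains an immersion (no focal points, $\Pdiff{y}{u}\neq 0$), which is tacitly needed before the sign of $F$ can be read as geometric convexity; it gives the quantitative curvature formula $\kappa_y=-f'/f$; and the persistence of $f'>0$ is exactly what justifies the \emph{strict} inequality $\pdiff{t}F<0$ that the paper asserts but does not fully argue. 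Conversely, the paper's route is shorter and frame-free: it needs no orthogonality lemma for the Jacobi field, no choice of parallel unit normal, and no non-vanishing argument, since monotonicity of $F$ makes sense regardless. Finally, your restriction to $t_0\geq 0$ is not a defect relative to the paper: the published proof is likewise only a forward-in-time monotonicity statement and says nothing for $t_0<0$ (despite the proposition being stated for all $t_0\in\reals$), and only forward propagation is used in the applications (\Cref{thm:genconprop}, \Cref{thm:confrontorth}, and the proof of \Cref{thm:uniqueness}).
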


\begin{proposition}\label{thm:genconprop}
	Suppose $M$ has non-positive sectional curvature, and that $K$ is a disjoint union of strictly convex obstacles. Then convex fronts in $S_K$ which hit an obstacle transversely (i.e. not tangentially) will remain convex after reflection.
\end{proposition}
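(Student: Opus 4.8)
The plan is to reduce the statement to a single reflection and then analyse how the curvature of a front transforms there. Between reflections the front propagates under the geodesic flow, so by \Cref{thm:conprop} convexity is preserved during free flight; it therefore suffices to show that a convex front incident transversely on one strictly convex component of $\partial K$ is again convex the instant after it reflects. Accordingly I would fix attention on a single reflection point and set up coordinates adapted to it, and then re-apply \Cref{thm:conprop} to propagate the conclusion forward.

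First I would give the convexity of a front a pointwise meaning. A front is a smooth family $(x(u),\omega(u))$ with $\omega$ the unit propagation direction normal to the curve $u\mapsto x(u)$, and its convexity is encoded in the sign of the geodesic curvature $\kappa(u)$ of that curve relative to $\omega$ (equivalently the scalar shape operator of the front). Because the front meets $\partial K$ transversely, the implicit function theorem produces a smooth contact-time function, and reparametrising by arclength $s$ along the contact curve $\xi(s)\subseteq\partial K$ turns the reflected family into a genuine smooth front $(\xi(s),\eta(s))$, where $\eta(s)$ is obtained from the incoming direction through the reflection law \cref{eq:symmetrytangent}. Transversality, i.e. $\cos\theta(s)>0$ for the angle of incidence $\theta(s)$ measured from the obstacle normal, is exactly what makes this reparametrisation a diffeomorphism and keeps the reflected object a nondegenerate front.

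The heart of the argument is a mirror equation relating the incoming curvature $\kappa^-$, the outgoing curvature $\kappa^+$, the geodesic curvature $k(s)>0$ of $\partial K$, and $\theta(s)$; I expect it to take the form
\begin{equation}
	\kappa^+(s) = \kappa^-(s) + \frac{2\,k(s)}{\cos\theta(s)}.
\end{equation}
I would derive this by differentiating the reflection relation along $\xi(s)$, decomposing the incoming and outgoing velocity fields into their components tangent and normal to $\partial K$ and applying the covariant derivative $\cdiff{s}$, so that the second fundamental form of $\partial K\subseteq M$ enters with a definite sign. With the mirror equation in hand the conclusion is immediate: a convex incoming front has $\kappa^-\geq 0$, strict convexity of the obstacle gives $k(s)>0$, and transversality gives $\cos\theta(s)>0$, whence $\kappa^+(s)>0$ for every $s$. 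Thus the reflected front is strictly convex at each contact point, hence convex, and a final appeal to \Cref{thm:conprop} shows it stays convex as it continues through $S_K$.

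The main obstacle is the derivation of the mirror equation in the Riemannian setting. In $\reals^n$ this is the classical dispersing-billiard computation, but here the contact curve lies on a curved obstacle inside a curved manifold, so ordinary derivatives must be replaced by covariant ones, parallel transport has to be tracked when comparing the incoming and outgoing fields across the reflection, and the sign of the contribution of the second fundamental form of $\partial K$ must be pinned down carefully. I expect \Cref{thm:convexlemma} to supply the convex orthogonal curves needed to represent the fronts in a form where these covariant computations remain manageable.
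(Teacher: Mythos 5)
Your plan is correct and shares the paper's overall structure: reduce to a single transversal reflection, use \Cref{thm:conprop} for the free flight before and after, and show that at the reflection the curvature of the front jumps in the dispersing direction. Where you genuinely differ is in how that jump is computed. The paper never reparametrises by arclength and never introduces the incidence angle explicitly: it fixes Riemannian normal coordinates at the reflection point, Taylor-expands the incoming and outgoing families $y(u,t)$ and $z(u,t)$ there, and derives the unnormalised identity $\left<\Pdifftwo{z}{u},\widetilde\omega\right> = \left<\Pdifftwo{x}{u},\omega\right> - 2\left<\omega,v\right>\left<\Pdifftwo{y}{u},v\right>$, valid in the limit where the front is brought up against the boundary ($t(u_0)\to 0$, $\varepsilon\to 0$) so that all Christoffel correction terms vanish; the signs are then read off directly ($-\left<\omega,v\right>=\cos\theta>0$ by transversality, $\left<\Pdifftwo{y}{u},v\right><0$ by strict convexity of $\partial K$, and $\left<\Pdifftwo{x}{u},\omega\right><0$ by convexity of the incoming front). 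This is your mirror equation in disguise: after normalising parameterisations the middle term is exactly the $2k/\cos\theta$ contribution, up to the opposite sign convention for convexity. What your route buys is the clean quantitative jump formula, standard in billiard theory, with the angle dependence explicit; what the paper's route buys is that it sidesteps the arclength reparametrisation, parallel-transport bookkeeping, and angle computations entirely, at the cost of a limiting/continuity argument instead of an exact formula. Two small points to tighten in your version: to know the contact curve gives a diffeomorphic reparametrisation you need not only transversality but also that the convex front has not focused by the time it reaches $\partial K$, which in nonpositive curvature follows from \Cref{thm:conprop}; and \Cref{thm:convexlemma} is not actually needed for this proposition -- the paper's proof (and, properly carried out, yours) works directly with the given front.
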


\begin{lemma}\label{thm:nontrappedboundary}
	In every open set $U$ of $\uT\partial K$ there are infinitely many points which generate a non-trapped and non-tangent geodesic in $S_K$. In fact, the set of these points has topological dimension 1, and its complement has dimension 0.
\end{lemma}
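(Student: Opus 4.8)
The plan is to realise the complement of the ``good'' set as a union of two obstruction sets and to show that each is negligible. Write $\mathcal{N}\subseteq\uT{\partial K}$ for the set of points whose generated geodesic is tangent to $\partial K$ at some point of its orbit, and $\mathcal{R}\subseteq\uT{\partial K}$ for the set of points generating a trapped geodesic. A point generates a non-trapped, non-tangent geodesic exactly when it lies outside $\mathcal{N}\cup\mathcal{R}$, so everything reduces to bounding the size of these two sets. Once I establish that $\mathcal{N}$ and $\mathcal{R}$ are each totally disconnected, i.e. of topological dimension $0$, the countable sum theorem for covering dimension gives $\dim(\mathcal{N}\cup\mathcal{R})=0$; since $\mathcal{N}$ is an $F_\sigma$ and $\mathcal{R}$ is closed, this is legitimate. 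Removing a dimension-$0$ set from the one-dimensional $\uT{\partial K}$ then leaves a dense set of dimension $1$, and because $\uT{\partial K}$ has no isolated points, a dense set meets every open $U$ in infinitely many points (given finitely many good points in $U$, deleting them leaves a nonempty open set which must again contain a good point). This yields both the density statement and the dimension count.

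For the tangent set $\mathcal{N}$ I would run the argument behind \Cref{thm:tangency}. Using the smooth dependence of the reflection points on the initial data from \Cref{thm:xydependance}, the condition that a geodesic be tangent to $\partial K$ at its $k$-th contact is the vanishing of a single smooth function whose differential is non-degenerate wherever the contact is genuinely grazing; hence for each $k$ this set is cut out transversally and is nowhere dense, and $\mathcal{N}$ is the countable union of these sets. Equivalently, the measure-zero conclusion inherited from \Cref{thm:tangency} shows $\mathcal{N}$ contains no interval, and an interval-free subset of the one-dimensional $\uT{\partial K}$ is totally disconnected, so $\dim\mathcal{N}=0$.

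The heart of the proof, and the step I expect to be the main obstacle, is showing the trapped set $\mathcal{R}$ has dimension $0$; this is where non-positive curvature and strict convexity are essential. By \Cref{thm:conprop} and \Cref{thm:genconprop} a strictly convex front stays strictly convex under the flow and under each transverse reflection off $K$, and in non-positive curvature the associated perpendicular Jacobi field is strictly expanded at every such reflection by a factor bounded below by some $\lambda>1$; this is exactly the dispersing/hyperbolic mechanism of Sinai's billiards \cite{MR0274721}. I would then argue by contradiction: if $\mathcal{R}$ contained a non-degenerate continuum it would contain a sub-arc of trapped points lying along an unstable curve, and since all these orbits are trapped they remain in the compact region $S_K$ for all forward time; yet the factor $\lambda$ applied at each of the infinitely many transverse reflections forces the parameter-length of the iterated arc to grow without bound, contradicting its confinement to a compact cross-section. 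Hence $\mathcal{R}$ meets every unstable curve, and symmetrically every stable curve, in a totally disconnected set, so $\dim\mathcal{R}=0$.

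Assembling the pieces, $\dim(\mathcal{N}\cup\mathcal{R})=\max(\dim\mathcal{N},\dim\mathcal{R})=0$, so the good set $\uT{\partial K}\setminus(\mathcal{N}\cup\mathcal{R})$ is dense and of dimension $1$, and the density argument above produces infinitely many good points in every open $U$. The technical care will be concentrated in the hyperbolicity estimate of the third paragraph: making the expansion factor $\lambda$ uniform across reflections, and first removing the tangencies (this is precisely why $\mathcal{N}$ is excluded before treating $\mathcal{R}$), so that the ``bounded region versus exponential expansion'' contradiction is valid. The earlier density-type lemmas and \Cref{thm:xydependance} handle the tangent set routinely, but the total disconnectedness of $\mathcal{R}$ is the genuinely new input in the Riemannian setting.
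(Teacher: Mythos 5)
Your overall skeleton---split the complement into the tangent set $\mathcal{N}$ and the trapped set $\mathcal{R}$, show each is $0$-dimensional, and conclude density plus the dimension count---is the same as the paper's, and the mechanism you invoke for $\mathcal{R}$ (uniform expansion at transverse reflections off strictly convex obstacles in non-positive curvature) is also the paper's. But your contradiction for $\mathcal{R}$ does not close, and this is the heart of the proof. Exponential growth of the \emph{length} of the iterated arc is not in conflict with confinement to a compact region: a curve of arbitrarily large length sits comfortably inside a compact phase space (it can wind and accumulate), and even local convexity of the front does not forbid spiralling. To make your argument work you need a uniform upper bound on the lengths of the iterated arcs; in dispersing billiards this comes from the fact that images of a convex front sharing a fixed itinerary are unstable curves lying in a bounded cone field, i.e.\ graphs of uniformly Lipschitz functions over subsets of $\partial K$, hence of length at most a constant depending only on $K$ --- only then does expansion by $\lambda^{n}$ force the initial arc to have length $O(\lambda^{-n})\to 0$. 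The paper avoids lengths altogether: it defines the itinerary map $f$ from the trapped non-tangent part of the front into the symbol space $\{1,\dots,k\}^{\mathbb{N}}$ and proves injectivity and bi-continuity from the endpoint estimate $d(x,y)\le a\lambda^{nb}\,d(\widetilde x,\widetilde y)$, where $d(\widetilde x,\widetilde y)$ is bounded by the \emph{diameter} of the compact phase space --- a bound one gets for free for the distance between two points, but never for the length of a curve. Dimension $0$ of the trapped set then follows because $f$ is a homeomorphism onto a subset of a $0$-dimensional space. Your proposal is missing exactly the ingredient (bounded length of iterated unstable arcs, or the switch from lengths of curves to distances between endpoints) that makes the hyperbolicity pay off.

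Two further gaps are worth naming. First, your treatment of $\mathcal{N}$ is not routine: \Cref{thm:tangency} concerns almost all points of $\T{S}$, not of $\uT{\partial K}$, and the nondegeneracy of the tangency condition is precisely what the paper proves in \Cref{thm:confrontorth} and \Cref{thm:onetangent}, using strict convexity and non-positive curvature via the auxiliary orthogonal front of \Cref{thm:convexlemma}; general-position reasoning from \Cref{thm:xydependance} does not supply it. Relatedly, elements of $\uT{\partial K}$ are \emph{tangent} vectors to $\partial K$, so before speaking of arcs of $\mathcal{R}$ ``lying along an unstable curve'' you must first transport $U$ to an honest convex front --- this is the paper's opening step (the front $\Sigma$ built from $U$ by \Cref{thm:convexlemma}) and it is not optional. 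Second, your closing principle ``removing a dimension-$0$ set from a $1$-manifold leaves a set of dimension $1$'' is false in general: $\mathbb{R}$ minus the rationals is $0$-dimensional. What rescues the conclusion is that the good set is \emph{open} (for a non-trapped geodesic with all reflections and the exit transversal, the same escape pattern persists under perturbation), so it is open and dense in a $1$-manifold; your density argument for ``infinitely many points in every open $U$'' is fine once openness, rather than a general dimension-theoretic principle, is what carries the dimension claim.
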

\begin{proof}
	See \Cref{appendix}
\end{proof}

\begin{figure}
	\center
	\def\svgwidth{0.9\linewidth}
	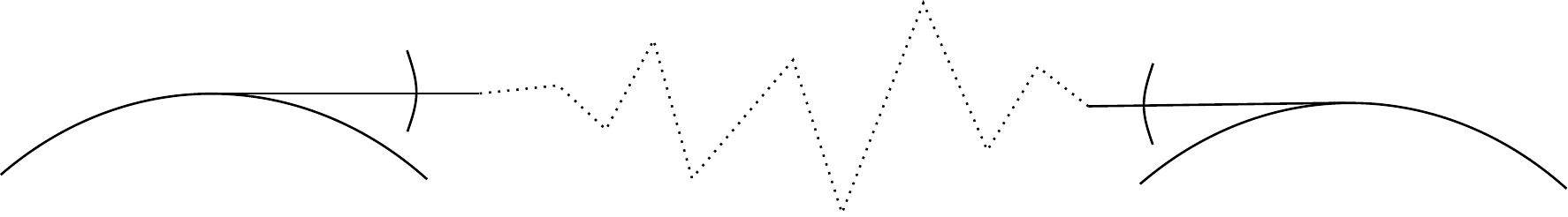
	\caption{Two convex fronts on a collision course}\label{fig:convexcollision}
\end{figure}

We now prove two useful results and then prove \Cref{thm:uniqueness}.

\begin{lemma}\label{thm:confrontorth}
	Let $\gamma$ be a generalised geodesic in $S_K$. Suppose there are two convex fronts, $X$ and $Y$ such that $\dot\gamma(0)$ is the outward unit normal to $X$ and for some $t_0>0$, the velocity $\dot\gamma(t_0)$ is the inward unit normal to $Y$ (\cref{fig:convexcollision}). Also Suppose that $\gamma$ reflects transversally between $X$ and $Y$. Parameterise $X$ as \[x:[a,b]\to S_K\] with $x(u_0)=\gamma(0)$ and unit outward normal $\omega(u)$. Then there exists an open set $U\subseteq [a,b]$ such that $(x(u_0),\omega(u_0))$ is the only point in the normal bundle of $X$ which generates a geodesic that hits $Y$ orthogonally.
\end{lemma}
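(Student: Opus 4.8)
The plan is to reduce the statement to the \emph{uniqueness of a common normal geodesic} between the two fronts, and then to establish that uniqueness by a second–variation argument powered by the convexity propositions. Parameterise $Y$ near $\gamma(t_0)$ as $y(v)$ with $y(v_0)=\gamma(t_0)$, so that $\dot\gamma(t_0)$ is the inward unit normal of $Y$ at $v=v_0$. Since $\gamma$ reflects transversally between $X$ and $Y$, \Cref{thm:xydependance} applies and furnishes, for $(u,v)$ near $(u_0,v_0)$, a smooth function $L(u,v)$ equal to the length of the (locally unique) connecting generalised geodesic from $x(u)$ to $y(v)$. By the first–variation formula of \Cref{thm:timeangle}, $\operatorname{grad}_x L$ is the negative of the initial unit tangent of that connecting geodesic; dotting against $x'(u)$ and $y'(v)$ shows that the connecting geodesic leaves $X$ orthogonally exactly when $\partial_u L=0$, and arrives at $Y$ orthogonally exactly when $\partial_v L=0$. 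The geodesic generated by $(x(u),\omega(u))$ is, by construction, the connecting geodesic that leaves $X$ orthogonally; writing $y(v(u))$ for the point where it meets $Y$, the map $v(u)$ is smooth and satisfies $\partial_u L(u,v(u))\equiv 0$. Hence a normal-bundle point $(x(u),\omega(u))$ generates a geodesic hitting $Y$ orthogonally precisely when $h(u):=\partial_v L(u,v(u))$ vanishes, and we already know $h(u_0)=0$.

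It therefore suffices to prove that $u_0$ is an isolated zero of $h$, which I would obtain by showing $h'(u_0)\neq 0$. Differentiating $\partial_u L(u,v(u))\equiv 0$ gives $L_{uu}+L_{uv}\,v'=0$ at $(u_0,v_0)$, and differentiating $h$ gives $h'(u_0)=L_{uv}+L_{vv}\,v'(u_0)$. Eliminating $v'(u_0)$ yields
\begin{equation}
	h'(u_0)=\frac{L_{uv}^2-L_{uu}L_{vv}}{L_{uv}}=-\frac{\det\,\mathrm{Hess}\,L}{L_{uv}},
\end{equation}
all evaluated at $(u_0,v_0)$. Thus the entire lemma reduces to showing that $\mathrm{Hess}\,L$ at the common–normal point $(u_0,v_0)$ is non-degenerate; note that once we know $L_{uu}\neq 0$, the relation $L_{uu}+L_{uv}v'=0$ forces $L_{uv}\neq 0$, so the expression above is well defined.

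The remaining, and main, task is to analyse $\mathrm{Hess}\,L$ via the second variation of arc length for $\gamma$ with endpoints sliding along the strictly convex curves $X$ and $Y$. Because $\gamma$ meets both curves orthogonally, the tangential reparametrisation direction drops out, and the second variation decomposes as the index form of $\gamma$ plus boundary terms built from the shape operators of $X$ and $Y$ evaluated against $\dot\gamma$. Since $M$ has non-positive sectional curvature, $\gamma$ has no conjugate points and the index form is positive definite; since $X$ is strictly convex with $\dot\gamma(0)$ its \emph{outward} normal while $Y$ is strictly convex with $\dot\gamma(t_0)$ its \emph{inward} normal, both boundary contributions carry the \emph{same} (positive) sign. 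This is exactly where \Cref{thm:conprop,thm:genconprop} enter: they guarantee that the convex front emanating normally from $X$ stays strictly convex as it flows and reflects up to $Y$, which controls the Jacobi field along $\gamma$ and fixes the sign of the mixed contribution. Combining these shows $\det\,\mathrm{Hess}\,L\neq 0$ (indeed $\mathrm{Hess}\,L$ is positive definite, with $L_{uu}>0$), so $h'(u_0)\neq 0$ and $u_0$ is an isolated zero of $h$. Choosing $U$ to be a sufficiently small neighbourhood of $u_0$ in $[a,b]$ — small enough that the inward normals $(x(u),-\omega(u))$ point away from $Y$ and hence cannot produce an orthogonal hit — gives the claimed open set. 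The principal obstacle is the sign bookkeeping in the second–variation computation: correctly identifying the orientations of the two shape operators relative to $\dot\gamma$ and verifying, with the aid of \Cref{thm:conprop,thm:genconprop}, that the curvature and boundary terms reinforce rather than cancel, so that the Hessian is genuinely definite.
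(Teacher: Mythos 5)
Your reduction is sound, and it is a genuinely different route from the paper's. The paper never introduces the two-point length function $L(u,v)$: it works on the Lagrangian/front side, Taylor-expanding the normally propagated front $y(u,t)$ in Riemannian normal coordinates, defining the orthogonality defect $f(u)=\left<\partial_u y(u),\omega(u)\right>$ directly, and computing $f'(u_0)=-K_X-K_Y+O(t)+O(g)>0$ (sum of the two front curvatures plus controllable error terms), with \Cref{thm:genconprop} invoked so the same computation survives each transversal reflection. Your $h(u)=\partial_v L(u,v(u))$ is, up to a nonvanishing factor, the same defect function, and your identity $h'(u_0)=-\det\mathrm{Hess}\,L/L_{uv}$ together with the constraint $L_{uu}+L_{uv}v'=0$ is a correct way to convert nondegeneracy of the Hessian into isolation of the orthogonal hit; in flat space one checks $L_{uu}=1/\ell+\kappa_X$, $L_{vv}=1/\ell+\kappa_Y$, $L_{uv}=-1/\ell$, so $h'(u_0)=\kappa_X+\kappa_Y+\ell\kappa_X\kappa_Y$, exactly the paper's quantity. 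So the two approaches are dual formulations of the same nondegeneracy.

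The genuine gap is that all of the analytic content of the lemma now lives in the claim that $\mathrm{Hess}\,L$ is positive definite, and you assert this rather than prove it. Two specific problems. First, for a \emph{reflecting} trajectory the statement ``$M$ has non-positive curvature, hence no conjugate points, hence the index form is positive definite'' is not valid as written: the second variation of a broken billiard geodesic carries additional terms at each reflection point, built from the second fundamental form of $\partial K$ and the incidence angle, and the sign of those terms is precisely what must be verified (it is favorable here because reflections off strictly convex obstacles hit from outside are dispersing --- but that is the content to be proved, not a consequence of curvature alone; focusing reflections in non-positive curvature can create conjugate points). Second, \Cref{thm:conprop} and \Cref{thm:genconprop} are statements about convexity of propagated fronts; to make them ``control the Jacobi field'' and fix the sign of the reflection and mixed terms in $\mathrm{Hess}\,L$ you need the bridge between front curvature and Jacobi fields (the Riccati/mirror-equation correspondence), which you do not supply. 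The paper avoids this translation entirely by doing the estimate on the front side, where \Cref{thm:genconprop} applies verbatim. A smaller omission: you need $v(u)$ and $L$ to be smooth, i.e.\ all nearby normal geodesics from $X$ must reflect transversally off the same obstacles; the paper spends a paragraph on this (finiteness of tangencies, via an auxiliary front from \Cref{thm:convexlemma}), though in your local setting openness of transversality along the finitely many reflection points of $\gamma$ would suffice, so this one is repairable in a line.
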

\begin{proof} First, suppose that $\gamma$ hits $Y$ before any reflections.
	Let \[y(u,t)=\genflow_t(x(u),\omega(u)),\] then possibly after shrinking $[a,b]$ around $u_0$, there is a function $t:[a,b]\to\reals$ such that \[y(u) = y(u,t(u))\subseteq Y\] and  $\gamma(t_0) = y(u_0)$. Pick Riemannian normal coordinates about $y(u_0)$. By \Cref{thm:conprop} we may assume that $X$ is within the chart (shrinking and propagating it forward until it is). We can then give the Taylor expansion of $y(u,t)$ locally. It is in fact the same as \cref{eq:taylory} from \Cref{thm:genconprop}. The first and second derivative of $y(u)$ are also exactly the same as \cref{eq:yder} and \cref{eq:yder2} respectively.
	Note that in this chart the second derivative of $y$ and the covariant derivative of $\pdiff{u}y$ are equivalent at $y(u_0)$. We then define the function
	\begin{equation}
		f(u) = \left<\Pdiff{y}{u}(u),\omega(u)\right> = \Pdiff{t}{u}(u) + \O(t^2) + \O(tg)
	\end{equation}
	Note that $f(u_0) = 0$. Also note that taking inner products with $\omega$ and \cref{eq:yder2} will give the following identity,
	\begin{equation}
		\Pdifftwo{t}{u}(u_0) = -K_Y - K_X + t\norm{\Pdiff{\omega}{u}(u_0)}^2 - \O(t) - \O(g)
	\end{equation}
	Here $K_X<0$ and $K_Y<0$ are the curvatures of $X$ and $Y$ at $x(u_0)$ and $y(u_0)$ respectively.
	We evaluate the derivative of $f$ at $u_0$:
	\begin{align}
		\Pdiff{f}{u}(u_0) &= \Pdifftwo{t}{u}(u_0) + \O(t) + \O(g)\\
		&= - K_Y - K_X + t\norm{\Pdiff{\omega}{u}(u_0)}^2 + \O(t) + \O(g)
	\end{align}
	Note that the last two terms can be made as small as we like, by bringing $x(u)$ forward. Therefore the derivative of $f$ at $u_0$ is positive. So we can shrink $[a,b]$ around $u_0$ such that $f$ has positive derivative on all of $[a,b]$. Now let $Z = f^{-1}(0)$. Then $Z$ is a closed 0-dimensional submanifold of $[a,b]$. Since $Z$ is closed and discrete, and $[a,b]$ is compact, $Z$ must be finite. Then since $[a,b]$ is Hausdorff we can find an open set $U\subseteq [a,b]$ such that $U\cap Z = \{u_0\}$. Note that a point $(x(u),\omega(u))$ in the normal bundle of $X$ generates a geodesic that hits $Y$ orthogonally if and only if $u\in Z$.
	
	Now we will deal with the case of reflections. Suppose that $X$ is one reflection away from $Y$. We wish to show that you may shrink $[a,b]$ around $u_0$ so that $X$ will hit $\partial K$ transversally. Then we may apply \Cref{thm:genconprop} and the result will follow from the argument above. Suppose $v\in [a,b]$ is such that $y(v,t_v)$ will hit $\partial K$ tangentially for some $t_v>0$. Then we may parameterise $\partial K$ in a closed neighbourhood $U_v$ of $y(v,t_v)$, as \[k:[-\delta,\delta]\to U_v\text{ such that }k(0) = y(v,t_v).\]
	Then by \Cref{thm:convexlemma} we can construct a convex front $\widetilde X$ such that the tangents of $U_v$ will hit $\widetilde X$ orthogonally. Now by the argument above, there are only finitely many $u\in [a,b]$ such that $y(u,t)$ will hit $\widetilde X$ orthogonally. Thus there are only finitely many $u\in [a,b]$ which will hit $\partial K$ tangentially (before reflecting). Thus we may shrink $[a,b]$ around $u_0$ so that $X$ will hit $\partial K$ transversally before reaching $Y$.
	
	The result now follows for any finite number of reflections.
\end{proof}

\begin{corollary}\label{thm:onetangent}
	The set of points in $\uT \partial K$ which generate a generalised geodesic that is tangent to any obstacle is countable.
\end{corollary}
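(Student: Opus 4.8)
The plan is to show that the set $\mathcal{T}\subseteq\uT\partial K$ of points generating a generalised geodesic that becomes tangent to an obstacle at some time $t_0>0$ is \emph{discrete} in the subspace topology; since $\uT\partial K$ is a second-countable manifold, any such discrete subset is countable, and the corollary follows. Recall that a point $(x_0,\omega_0)\in\uT\partial K$ is a unit vector tangent to $\partial K$, so the geodesic it generates is a grazing ray of the strictly convex boundary. The whole difficulty is therefore to produce, for each $(x_0,\omega_0)\in\mathcal{T}$, a neighbourhood in $\uT\partial K$ meeting $\mathcal{T}$ only at $(x_0,\omega_0)$.

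The first step is to reduce the statement to one about orthogonal incidence of convex fronts, so that \Cref{thm:confrontorth} can be applied. On the source side, I would use \Cref{thm:convexlemma} with $c=\partial K$ to build a strictly convex front $\widetilde X$ near $x_0$ whose orthogonal geodesics are precisely the grazing rays of $\partial K$; this identifies a neighbourhood of $(x_0,\omega_0)$ in $\uT\partial K$ with an arc of the normal bundle of $\widetilde X$, parameterised by $u\in[a,b]$. On the target side, if the geodesic $\gamma$ generated by $(x_0,\omega_0)$ is tangent to an obstacle at $p=\gamma(t_0)$, then $\dot\gamma(t_0)$ is tangent to the (strictly convex) obstacle boundary there, and a second application of \Cref{thm:convexlemma} to that boundary produces a strictly convex front $Y$ through $p$ to which $\dot\gamma(t_0)$ is normal. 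Thus $\gamma$ leaves $\widetilde X$ orthogonally and meets $Y$ orthogonally, which is exactly the configuration governed by \Cref{thm:confrontorth}.

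To turn the single-target conclusion of \Cref{thm:confrontorth} into genuine isolation within $\mathcal{T}$, I would stratify $\mathcal{T}=\bigcup_{n\ge 1}\mathcal{T}_n$, where $\mathcal{T}_n$ consists of those points whose geodesic undergoes exactly $n-1$ transversal reflections and then is tangent to an obstacle for the \emph{first} time at its $n$-th collision. Every tangent-generating ray lies in some $\mathcal{T}_n$, since the reflection times are discrete and so only finitely many collisions precede $t_0$. For $(x_0,\omega_0)\in\mathcal{T}_n$ the first $n-1$ reflections are transversal, hence stable under perturbation; by continuity of the flow, all sufficiently nearby points of $\mathcal{T}_n$ realise the same reflection pattern and become tangent at a point close to $p$, so they too meet $Y$ orthogonally. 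Because the intermediate reflections are transversal, \Cref{thm:confrontorth} applies and provides an open arc $U$ of the normal bundle of $\widetilde X$ on which $(x_0,\omega_0)$ is the \emph{only} point generating a geodesic meeting $Y$ orthogonally. Transporting $U$ back through the identification above gives a neighbourhood of $(x_0,\omega_0)$ in $\uT\partial K$ meeting $\mathcal{T}_n$ only at $(x_0,\omega_0)$. Hence each $\mathcal{T}_n$ is discrete and therefore countable, and a countable union (over $n$, and over the finitely many obstacle components) shows $\mathcal{T}$ is countable.

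The step I expect to be the main obstacle is precisely this upgrade from the ``fixed front $Y$'' statement of \Cref{thm:confrontorth} to isolation inside $\mathcal{T}$: I must exclude the possibility that nearby tangent-generating rays acquire their tangency at a far-away point, or off a different obstacle, where the chosen $Y$ is useless. The stratification by first-tangency index together with the openness of transversal reflection is designed exactly to pin the tangency of nearby rays near $p$. A subsidiary point requiring care is that \Cref{thm:confrontorth} demands transversal reflections strictly between the two fronts; here this is automatic from the definition of $\mathcal{T}_n$, but one must still invoke the localisation arguments behind \Cref{thm:convexlemma} and \Cref{thm:genconprop} to guarantee that the front $Y$ is reached with the convexity of the propagated front intact.
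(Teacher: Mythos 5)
Your overall architecture (stratify by the index of the first tangency, convert tangency into orthogonal incidence on a convex front via \Cref{thm:convexlemma}, then invoke \Cref{thm:confrontorth}) matches the paper's, but the step you yourself flag as the main obstacle is a genuine gap, and the device you propose to close it does not work. The claim ``by continuity of the flow, all sufficiently nearby points of $\mathcal{T}_n$ realise the same reflection pattern and become tangent at a point close to $p$'' is false: transversal reflections are stable under perturbation, but a tangential collision is not. A point of $\mathcal{T}_n$ arbitrarily close to $(x_0,\omega_0)$ may simply \emph{miss} the obstacle near $p$ (the reference geodesic only grazes it there), continue past, and acquire its $n$-th collision --- tangential, by membership in $\mathcal{T}_n$ --- somewhere else entirely: near the next collision point of the continuation of $\gamma$ beyond $p$ (for instance when $\gamma$ grazes a second obstacle at a point $q$ after $p$), or, if that continuation has no further collisions, at arbitrarily late times at essentially arbitrary places on $\partial K$. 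Your stratification pins down the combinatorial index of the tangency, not its location, and for such points the front $Y$ constructed at $p$ gives no information. So the isolation of $(x_0,\omega_0)$ inside $\mathcal{T}_n$ does not follow, and discreteness of the strata is not a consequence of \Cref{thm:confrontorth}; what is actually provable is only countability.

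The paper closes exactly this hole by making the target fronts global rather than attached to one reference tangency. In the second paragraph of the proof of \Cref{thm:confrontorth}, each potential tangency site on $\partial K$ carries a neighbourhood $U_v$ and a front $\widetilde X_v$ from \Cref{thm:convexlemma}, and compactness of $\partial K$ reduces to finitely many such fronts; hence \emph{every} tangency of \emph{every} geodesic issued from the source front, wherever it occurs, is an orthogonal incidence on one of finitely many fixed fronts, and on a compact parameter interval whose earlier reflections are all transversal this yields finitely many $n$-th-collision tangencies. The corollary is then proved by induction on $n$: the parameters with an earlier tangency form a countable set $\widetilde R_{n-1}$, its complement in $(a,b)$ is a countable union of open intervals on which the first $n-1$ reflections are transversal (so \Cref{thm:genconprop} keeps the propagated front convex), and exhausting each such interval by compact subintervals gives countably many tangencies at the $n$-th collision. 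If you replace your single front $Y$ at $p$ by this finite family of fronts covering $\partial K$, and the discreteness claim by local finiteness on compact parameter sets, your argument becomes correct and in fact coincides with the paper's.
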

\begin{proof}
	Since we can pick a countable subcover of $\uT \partial K$ it suffices to show that this holds in every neighbourhood $U\subseteq \uT\partial K$. We construct the convex front $\Sigma$ from $U$ as in \Cref{thm:convexlemma}, parameterised on some $(a,b)$.
	Let $R_i$ be the set of points in $(a,b)$ which generate a geodesic that is tangent after $i$ reflections. By \Cref{thm:confrontorth}, the set $R_1$ is finite. Now suppose that $R_i$ is countable for all $i<n$. Then \begin{equation}
		\widetilde R_{n-1} = \bigcup^{n-1}_{i=1} R_i
	\end{equation}
	is a countable set. We can pick some $u_0\in\widetilde R_{n-1}$ and order the rest such that
	\begin{equation}
		a<\cdots<u_{-1}<u_0<u_1<\cdots<b
	\end{equation}
	Then $\Sigma$ restricted to each $(u_i,u_{i+1})$ we will have transversal reflections for the first $n-1$ times. Now by the same argument as \Cref{thm:confrontorth}, there will be at most countably many tangencies after $n-1$ reflections in $(u_i,u_{i+1})$. For the edge cases, there is either a minimum $u_a$ such that $a<u_a<u_i$ for all $i$. In which case $(a,u_a)$ will also have at most countably many tangencies. Or if no such $u_a$ exists then we must have
	\begin{equation}
		(a,u_0) = \bigcup_{i=0}^\infty (u_{-i-1},u_{-i})
	\end{equation}
	A similar argument will follow for the upper bound $b$ as well. Thus in all cases the set
	\begin{equation}
		R'_n = \bigcup_{i=-\infty}^\infty (u_{i-1},u_i)
	\end{equation}
	Will be countable, (possibly including $(a,u_a)$ and $(b,u_b)$ if necessary). Thus $R_n \subseteq R'_n\cup\widetilde R_{n-1}$ so $R_n$ must be countable as well. It follows by induction that the set $R$ of tangencies in $(a,b)$, must be countable as well, since
	\begin{equation}
		R = \bigcup_{i=1}^\infty R_n
	\end{equation}
\end{proof}

In the following proof we assume $K\neq L$ and construct a convex front from an obstacle in $S_K$ and one in $S_L$ such that they contradict \Cref{thm:confrontorth}.
\begin{proof}[Proof of \Cref{thm:uniqueness}]
	First, we show that $K\subseteq L$. Suppose otherwise, and pick some point $x_0\in\partial K$ such that $x_0\not\in L$. Then there is a small neighbourhood $U\subseteq\partial K$ of $x_0$ such that $U\cap L=\emptyset$. By \Cref{thm:convexlemma} and possibly shrinking $U$ we can construct a convex front $X$ such that the geodesics generated by the tangents of $U$ will hit $X$ orthogonally. Parameterise $X$ around $x_0$ on a closed interval, so that the image of the parametrisation is a compact convex front. We shrink $X$ to that image and treat $X$ as compact from now on. Let $\widetilde X$ be the set of points $(x,v_x)$ where $x\in X$ and $v_x$ is the outward unit normal to $X$ at $x$ such that:
	 \begin{enumerate}
	 	\item  $(x,v_x)$ generates a non-trapped geodesic in $S_K$ and $S_L$
	 	\item  $(x,v_x)$ generates a geodesic with exactly one point of tangency in $S_K$ and exactly on point of tangency in $S_L$
	 \end{enumerate}
	 Then by \Cref{thm:onetangent} and \Cref{thm:nontrappedboundary}, the set $\widetilde X$ is infinite (in fact, it has topological dimension 1). Note that demanding a tangency for both $S_K$ and $S_L$ is not difficult, since by \Cref{thm:conjugacy}, $K$ and $L$ have conjugate flows. Meaning they have the same travelling times (by \Cref{thm:conjugacy}). There will be a singularity in the travelling time function at the point of tangency for $K$ and therefore for $L$ too since the two functions are equal. Thus a tangency in $S_K$ will guarantee a tangency in $S_L$
	
	Given $(x,v_x)\in \widetilde X$, the geodesic $\gamma_K$ in $S_K$ generated by $(x,v_x)$ will have exactly one point of tangency by definition.  So will $\gamma_L$, the corresponding geodesic in $S_L$, have exactly one point of tangency. However, since $\gamma_K$ is tangent to $\partial K$ at some point in $U$, we know $\gamma_L$ must be tangent at some other point $z(x,v_x)\in \partial L$. Let 
	\begin{equation}
		Z = \{z(x,v_x):(x,v_x)\in \widetilde X\}
	\end{equation}
	Then, by uniqueness of geodesics, $z:\widetilde X\to Z$ is a bijection.
	
	Suppose for a moment that for every $z\in\partial L$ there is a neighbourhood $W_z\subseteq\partial L$ such that $W_z\cap Z$ is finite. Then picking a finite cover $W_1,\dots,W_n$ of these we see that $Z = \cup_{i=1}^n W_i\cap Z$ is finite, a contradiction. Thus there is some $z_0\in\partial L$ such that for any neighbourhood $W\subseteq \partial L$ of $z_0$, the set $\widetilde W = W\cap Z$ is infinite.
	
	Now since $\widetilde W$ is dense in $W$, we can pick a sequence $\{z_i\}_{i=1}^\infty \to z_0$ of points $z_i\in\widetilde W$. Then for each $z_i$ there is a corresponding $(x_i,v_i)\in\widetilde X$, such that \[\pi_1\circ \genflow^{(L)}_{t_i}(x_i,v_i)=z_i,\] for some time $t_i\in\reals$. If $t_i<0$ we may construct $X$ in the opposite direction so that $t_i>0$. Therefore we assume $t_i>0$. Construct $Y$ in $S_L$ via \Cref{thm:convexlemma} from $W$ around $z_0$, such that the outward unit normals to $Y$ will point toward $X$.
	
	We can shrink $X$ so that there will be infinitely many $(x_i, v_i)$ in $X$, and it will propagate transversely until it hits $Y$. Since $X$ is compact, we may take a convergent subsequence of $\{(x_i,v_i)\}_{i=1}^\infty$, and denote the point it converges to as $(\widetilde x,\widetilde v)$. Note that no matter how much we shrink $X$ around $(\widetilde x, \widetilde v)$ we must have infinitely many $(x_i,v_i)\in X$ since the subsequence we took converges to $(\widetilde x, \widetilde v)$. Thus we can safely shrink $X$ around $(\widetilde x,\widetilde v)$ so that it propagates transversally until hitting $Y$.
	
	Then by our construction, every $(x_i,v_i)$ in $X$ will generate a geodesic which hits $Y$ orthogonally. There are infinitely many such $(x_i,v_i)$ in $X$, a contradiction to \Cref{thm:confrontorth} which asserts that there must only be finitely many of these. So $K\subseteq L$, and by applying the same argument, $K = L$. 
\end{proof}

\section{Proofs of Propositions 9, 10 and 11}
\label{sec:hardproofs}

The common difficulty shared by the proofs in this section is exhibiting the curvature of the convex front, whether it be from the influence of the manifolds intrinsic negative sectional curvature as in \Cref{thm:conprop,thm:genconprop}, or simply by construction as in \Cref{thm:convexlemma}. Although \Cref{thm:conprop} is done in a completely coordinate-free way, in both \Cref{thm:convexlemma,thm:genconprop} we use a Riemannian coordinate chart to allow us to use a Taylor expansion of the geodesics in question.

\begin{proof}[Proof of \Cref{thm:convexlemma}]
	Define the family of geodesics $y:[a,b]\times[0,1]\to M$ as
	\begin{equation}
		y(u,t) = \pi_1\circ\flow_t\left(c(u),\Pdiff{c}{u}(u)\right)
	\end{equation}
	Note that since $y$ is a variation through geodesics, $\Pdiff{y}{u}$ is a Jacobi field at each $u\in [a,b]$ which satisfies the initial conditions:
	\begin{equation} 
		\Pdiff{y}{u}(u,0) = \Pdiff{c}{u}(u)
	\end{equation}
	\begin{equation}
		\cdiff{t}\Pdiff{y}{u}(u,0) = \cdiff{u}\Pdiff{c}{u}(u)
	\end{equation}
	Furthermore, since $\Pdiff{y}{u}$ is a Jacobi field, it satisfies the differential equation:
	\begin{equation}\label{eq:jacobifield}
		\cdifftwo{t}\Pdiff{y}{u} + R\left(\Pdiff{y}{u},\Pdiff{y}{t}\right)\Pdiff{y}{t} = 0
	\end{equation}
	In this construction, the parallel translate of $\Pdiff{c}{u}$ is the tangent vector $\Pdiff{y}{t}$, so we wish to construct a curve $\widetilde y$  such that $\Pdiff{\widetilde y}{u}$ is orthogonal to $\Pdiff{y}{t}$. Consider a curve of the form $\widetilde y(u) = y(u,r(u))$,
	where $r:[a,b]\to\reals$ is some smooth function. We want to enforce the following condition,
	\begin{equation}\label{eq:orthcondition}
		\left<\Pdiff{\widetilde y}{u},\Pdiff{y}{t}\right> = 0
	\end{equation}
	Noting that
	\begin{equation}
		\Pdiff{\widetilde y}{u} = \Pdiff{y}{u} + \Pdiff{r}{u}\Pdiff{y}{t}
	\end{equation}
	We can see that \cref{eq:orthcondition} is equivalent to
	\begin{equation}
		\Pdiff{r}{u} = -\left<\Pdiff{y}{u},\Pdiff{y}{t}\right>
	\end{equation}
	So consider the function $g:[a,b]\times [0,1]\to\reals$ given by
	\begin{equation}
		g(u,t) = \left<\Pdiff{y}{u}(u,t),\Pdiff{y}{t},(u,t)\right>
	\end{equation}
	Then taking derivatives with respect to $t$ we get,
	\begin{equation}
		\Pdiff{g}{t} = \left<\cdiff{t}\Pdiff{y}{u},\Pdiff{y}{t}\right> + \left<\Pdiff{y}{u},\cdiff{t}\Pdiff{y}{t}\right> = \left<\cdiff{t}\Pdiff{y}{u},\Pdiff{y}{t}\right>
	\end{equation}
	Here, the second term evaluates to zero since $y$ is a geodesic in the $t$ direction. Now taking derivatives once more yields
	\begin{equation}
		\Pdifftwo{g}{t} = \left<\cdifftwo{t}\Pdiff{y}{u},\Pdiff{y}{t}\right> = -\left< R\left(\Pdiff{y}{u},\Pdiff{y}{t}\right)\Pdiff{y}{t}, \Pdiff{y}{t}\right>
	\end{equation}
	Here we used \cref{eq:jacobifield} to get the second equality. Note that there are three instances of $\Pdiff{y}{t}$ in the last inner product, so the anti-symmetry properties of the Riemannian curvature ensure that this inner product evaluates to zero. That is, $g$ must have the form $g(u,t) = \alpha(u) + \beta(u)t$, for some smooth functions $\alpha,\beta:[a,b]\to\reals$. Note that $g(u,0) = \alpha(u)$. So we can directly evaluate
	\begin{equation}
		\alpha(u) = \left<\Pdiff{y}{u}(u,0),\Pdiff{y}{t},(u,0)\right> = \norm{\Pdiff{c}{u}(u)}^2 = 1
	\end{equation}
	We can also find $\beta(u)$ as follows, using the initial conditions for $y$,
	\begin{equation}
		\beta(u) = \Pdiff{g}{t}(u,0) = \left<\cdiff{t}\Pdiff{y}{u}(u,0),\Pdiff{y}{t}(u,0)\right> = 0
	\end{equation}
	Therefore \cref{eq:orthcondition} reduces to
	\begin{equation}
		\Pdiff{r}{u}(u) = -1
	\end{equation}
	So we can define our convex curve as $\widetilde y(u) = y(u,\lambda-u)$, where $\lambda\in\reals$ is some small constant. Now all that is left to show is the strict convexity of $\widetilde y$. To do so, fix $u_0\in[a,b]$. We show that $\widetilde y$ is strictly convex in a neighbourhood of $u_0$. That is, for some $\lambda\in\reals$ we get
	\begin{equation}\label{eq:convexitycondition}
		\left<\cdiff{u}\Pdiff{\widetilde y}{u}(u_0), \Pdiff{y}{t}(u_0,\lambda-u_0) \right> < 0
	\end{equation}
	Consider the following function:
	\begin{equation}
		f(u,t) = \left<\cdiff{u}(\Pdiff{y}{u}(u,t)-\Pdiff{y}{t}(u,t)), \Pdiff{y}{t}(u,t)\right>
	\end{equation}
	The condition \cref{eq:convexitycondition} is equivalent to $f(u_0,\lambda-u_0)<0$. Note that
	\begin{equation}
		\left<\Pdiff{y}{u}-\Pdiff{y}{t},\Pdiff{y}{t}\right> = 0.
	\end{equation} 
	So we can re-write $f$ as follows:
	\begin{align}
		f = \pdiff{u}\left<\Pdiff{y}{u}-\Pdiff{y}{t},\Pdiff{y}{t}\right> - \left<\Pdiff{y}{u}-\Pdiff{y}{t},\cdiff{u}\Pdiff{y}{t}\right> = - \left<\Pdiff{y}{u}-\Pdiff{y}{t},\cdiff{t}\Pdiff{y}{u}\right>
	\end{align}
	In the last step we also used the symmetry
	\begin{equation}
		\cdiff{t}\Pdiff{y}{u} = \cdiff{u	}\Pdiff{y}{t}
	\end{equation}
	Now consider the derivative of $f$ along $t$,
	\begin{align}
		\Pdiff{f}{t} &=  - \left<\cdiff{t}(\Pdiff{y}{u}-\Pdiff{y}{t}),\cdiff{t}\Pdiff{y}{u}\right> -  \left<\Pdiff{y}{u}-\Pdiff{y}{t},\cdifftwo{t}\Pdiff{y}{u}\right>\\
		&= -\left<\cdiff{t}\Pdiff{y}{u},\cdiff{t}\Pdiff{y}{u}\right>+\left<\Pdiff{y}{u}-\Pdiff{y}{t}, R\left(\Pdiff{y}{u},\Pdiff{y}{t}\right)\Pdiff{y}{t}\right>\\
		&= -\norm{\cdiff{u}\Pdiff{y}{t}}^2 + \left<\Pdiff{y}{u}, R\left(\Pdiff{y}{u},\Pdiff{y}{t}\right)\Pdiff{y}{t}\right>\label{eq:curvaturederivative}
	\end{align}
	In the second line we use the fact that $y$ is a geodesic in the $t$ direction and \cref{eq:jacobifield} to simplify the expression. Then in the third line we used the symmetry of the covariant and partial derivatives again, and the anti-symmetry properties of the Riemannian curvature as before. Now taking some coordinates around $c(u_0)$ and ensuring that $\lambda$ is small enough so that $\widetilde y(u_0)$ is within the chart, we can estimate $f(u_0,\lambda-u_0)$ using a Taylor expansion:
	\begin{equation}
		f(u_0,\lambda-u_0) = f(u_0,0) + (\lambda-u_0) \Pdiff{f}{t}(u_0,0) + \O(\lambda-u_0)^2
	\end{equation}
	Note that we can directly evaluate $f(u_0,0) = 0$. We can also give Taylor expansions for the derivatives of $y$:
	\begin{equation}
		\Pdiff{y}{t}(u,t) = \Pdiff{c}{u}(u) + O(t)
	\end{equation}
	\begin{equation}
		\Pdiff{y}{u}(u,t) = \Pdiff{c}{u}(u) + O(t)
	\end{equation}
	Therefore the curvature term in \cref{eq:curvaturederivative} will be zero up to a first order approximation. That is,
	\begin{equation}
		\Pdiff{f}{t}(u_0,0) = -\norm{\cdiff{u}\Pdiff{c}{u}(u_0)}^2 + O(\lambda-u_0)^2
	\end{equation}
	Note that the convexity of the curve $c$ ensures that the first term is nonzero. So substituting back into the Taylor expansion we get
	\begin{equation}
		f(u_0,\lambda-u_0) = -(\lambda-u_0) \norm{\cdiff{u}\Pdiff{c}{u}(u_0)}^2 + \O(\lambda-u_0)^2
	\end{equation}
	Thus we can pick $\lambda>u_0$ such that $\lambda - u_0$ is small enough to ensure the curve $\widetilde y$ is strictly convex at $u_0$. Then by continuity, $\widetilde y$ is strictly convex in a neighbourhood of $u_0$ as desired.
\end{proof}

\begin{proof}[Proof of \Cref{thm:conprop}]
	Given a convex, unit speed curve $x:[a,b]\to M$ and its unit outward normal $\omega(u)$. Let
	\begin{equation}
		y(u,t) = \pi_1\circ \flow_{t}(x(u),\omega(u))
	\end{equation}
	Define the map $f:[a,b]\times\reals\to\reals$ by
	\begin{equation}
		f(u,t) = \left< \cdiff{u}\Pdiff{y}{u}(u,t), \Pdiff{y}{t}(u,t) \right>
	\end{equation}
	This map is the convexity of the propagated curve at each point $y(u,t)$. Since $x$ is a convex curve we have
	\begin{equation}
		f(u,0) = \left< \cdiff{u}\Pdiff{x}{u}(u), \omega(u) \right> < 0
	\end{equation}
	We investigate how the convexity changes over time as the curve is propagated forward. Consider the following derivative:
	\begin{equation}
		\pdiff{t}f =  \left<\cdiff{t} \cdiff{u}\Pdiff{y}{u}, \Pdiff{y}{t} \right> +  \left< \cdiff{u}\Pdiff{y}{u}, \cdiff{t} \Pdiff{y}{t} \right>
	\end{equation}
	Note that keeping $u$ fixed, $y(u,t)$ is a geodesic, so 
	\[\cdiff{t}\pdiff{t}y(u,t) = 0\]
	We now use the symmetry 
	\begin{equation}
		\cdiff{t} \cdiff{u}\Pdiff{y}{u} - \cdiff{u} \cdiff{t}\Pdiff{y}{u} = R\left(\Pdiff{y}{u},\Pdiff{y}{t}\right)\Pdiff{y}{u}
	\end{equation}
	Then we can write
	\begin{equation}\label{eq:convexityderivative}
		\pdiff{t}f = \left<\cdiff{u} \cdiff{t}\Pdiff{y}{u}, \Pdiff{y}{t} \right> + \left<R\left(\Pdiff{y}{u},\Pdiff{y}{t}\right)\Pdiff{y}{u},\Pdiff{y}{t}\right>
	\end{equation}
	Now we focus on the first term on the right hand side:
	\[\left<\cdiff{u} \cdiff{t}\Pdiff{y}{u}, \Pdiff{y}{t} \right> = \left<\cdifftwo{u}\Pdiff{y}{t}, \Pdiff{y}{t} \right>\]
	Since $\Pdiff{y}{t}$ is the parallel translate of $\omega$, it has unit norm. Therefore we can write
	\begin{equation}\label{eq:conder1}
		\left<\cdifftwo{u}\Pdiff{y}{t}, \Pdiff{y}{t} \right> = -\norm{\cdiff{t}\Pdiff{y}{t}}
	\end{equation}
	Now for the second term in \cref{eq:convexityderivative}, we simply note that $\pdiff{u}y$ and $\pdiff{t}y$ are orthonormal, so we have the sectional curvature
	\begin{equation}\label{eq:conder2}
		\left<R\left(\Pdiff{y}{u},\Pdiff{y}{t}\right)\Pdiff{y}{u},\Pdiff{y}{t}\right> = K\left(\Pdiff{y}{u},\Pdiff{y}{t}\right)\leq 0
	\end{equation}
	Finally putting \cref{eq:conder1} and \cref{eq:conder2} together with \cref{eq:convexityderivative} we get
	\begin{equation}
		\pdiff{t}f = -\norm{\cdiff{t}\Pdiff{y}{t}} + K(\Pdiff{y}{u},\Pdiff{y}{t}) < 0
	\end{equation}
	Therefore the curve $y(u,t)$ will become more convex the further along it is propagated.
\end{proof}
In \cite{LUCHOBOOK} (Lemma 10.1.3) there is a proof given for the Euclidean case. We generalise this argument to the Riemannian case which is considerably more difficult.

\begin{proof}[Proof of \Cref{thm:genconprop}]
	Given a convex, unit speed curve $x:[a,b]\to S_K$ and its unit outward normal $\omega(u)$, define the propagated curve
	\begin{equation}
		Y(u,t) = \genflow_t(x(u),\omega(u))
	\end{equation}
	By \Cref{thm:conprop} we know that $y(u,t) = \pi_1\circ Y(u,t)$ will remain convex for any fixed $t\in\reals$ as long as it is entirely in the interior of $S_K$. i.e. before any reflections. So all that remains to show is that $y(u,t)$ will stay convex after reflecting off an obstacle. We assumed that all of $x(u)$ will hit an obstacle $K$ transversally, so there is a smooth time function $t:[a,b]\to\reals$ such that $y(u,t(u))\in \partial K$ for all $u\in [a,b]$. Define
	\begin{equation}
		z(u,t) = y(u,t(u) + t) = \pi_1\circ\genflow_{t}(Y(u,t(u)))
	\end{equation}
	 Let $T>t(u_0)$ be some time after the reflection but before any more reflections with another obstacle. Define $\varepsilon = T-t(u)$ and let $z(u) = z(u,\varepsilon)$. Also let $y(u) = y(u,t(u))$, and define $v(u)$ to be the unit outward normal to $\partial K$ at $y(u)$.
	We wish to consider how the curvature of $y(u,t)$ changes at the point of reflection $y(u)$.
	Fix $u_0\in [a,b]$ and consider some Riemannian coordinates about the point $y_0 = y(u_0)$. To ensure that $x(u)$ and $z(u)$ are both contained in the chart we may have to decrease $\varepsilon$ somewhat and replace $x(u)$ with $y(u,\varepsilon')$ for some $\varepsilon'>0$.
	We give Taylor expansions of both $y(u,t)$ and $z(u,t)$ locally.
	\begin{align}
		y(u,t) &= y(u,0) + t\Pdiff{y}{t}(u,0) + \frac{1}{2}t^2\Pdifftwo{y}{t}(u,0) + \cdots\\
			   &= x(u) + t\omega(u) + \frac{1}{2}t^2g(u) + \O(t^3) \label{eq:taylory}
	\end{align}
	Now we use the fact that $y$ is a geodesic along the $t$ variable to give
	\begin{equation}\label{eq:christoffel}
		g^k(u) = \Pdifftwo{y^k}{t}(u,0) = -\sum_{i,j=0}^{2}\Pdiff{y^i}{t}(u,0)\Pdiff{y^j}{t}(u,0)\Gamma_{ij}^k(x(u))
	\end{equation}
	Note that as $t(u_0)\to 0$ by moving the curve $x(u)$ forward, the Christoffel symbols will tend to zero, $\Gamma_{ij}^k(x(u_0))\to \Gamma_{ij}^k(y(u_0)) = 0$.
	So $g(u_0)$ is a term which tends to zero as $t(u_0)\to 0$.
	Now we find the Taylor expansion for $z(u,t)$ similarly
	\begin{equation}
		z(u,t) = y(u) + t\widetilde\omega(u) + \frac{1}{2}t^2\widetilde g(u) + \O(t^3)
	\end{equation}
	Here $\widetilde\omega(u)$ is the direction of the normal after reflection. That is, 
	\begin{equation}\label{eq:inversion}
		\widetilde\omega(u) = \omega(u) - 2\left<\omega(u),v(u)\right>v(u)
	\end{equation}
	And $\widetilde g$ is defined similarly to $g$ from \cref{eq:christoffel},
	\begin{equation}
		\widetilde g^k(u) = \Pdifftwo{z^k}{t}(u,0) = -\sum_{i,j=0}^{2}\Pdiff{z^i}{t}(u,0)\Pdiff{z^j}{t}(u,0)\Gamma_{ij}^k(y(u))
	\end{equation}
	Now we calculate the derivative of $y(u)$, which will be helpful later,
	\begin{equation}\label{eq:yder}
		\Pdiff{y}{u} = \Pdiff{x}{u} + \Pdiff{t}{u}\omega + t\Pdiff{\omega}{u} + \O(t^2) + \O(tg)
	\end{equation}
	And also the second derivative:
	\begin{equation}\label{eq:yder2}
		\Pdifftwo{y}{u} = \Pdifftwo{x}{u} + \Pdifftwo{t}{u}\omega + 2\Pdiff{t}{u}\Pdiff{\omega}{u} + t\Pdifftwo{\omega}{u} + \O(t) + \O(g)
	\end{equation}
	Similarly we find the first and second derivatives of $z(u)$,
	\begin{equation}\label{eq:zder}
		\Pdiff{z}{u} = \Pdiff{y}{u} - \Pdiff{t}{u}\widetilde\omega + \varepsilon\Pdiff{\widetilde\omega}{u} +\O(\varepsilon^2) + \O(\varepsilon\widetilde g)
	\end{equation}
	\begin{equation}\label{eq:zder2}
		\Pdifftwo{z}{u} = \Pdifftwo{y}{u} - \Pdifftwo{t}{u}\widetilde\omega - 2\Pdiff{t}{u}\Pdiff{\widetilde\omega}{u} + \varepsilon\Pdifftwo{\widetilde\omega}{u} +\O(\varepsilon) + \O(\widetilde g)
	\end{equation}
	Now from \cref{eq:yder}, taking inner products with $\omega$ will give the following identity,
	\begin{equation}
		\Pdiff{t}{u} = \left<\Pdiff{y}{u},\omega\right> - \O(t^2) - \O(tg)
	\end{equation}
	Combining this with \cref{eq:zder} and taking inner products with $\widetilde\omega$ we get:
	\begin{equation}
		\left<\Pdiff{z}{u},\widetilde\omega\right> = \left<\Pdiff{y}{u},\widetilde\omega-\omega\right> + \O(t^2) + \O(\varepsilon^2) + \O(tg) + \O(\varepsilon\widetilde g)
	\end{equation}
	Bringing $x(u_0)$ and $z(u_0)$ close to $y(u_0)$ we can see that $t(u_0)\to 0$ and $\varepsilon\to 0$, giving us
	\[\left<\Pdiff{z}{u},\widetilde\omega\right> = \left<\Pdiff{y}{u},\widetilde\omega-\omega\right> = 0.\]
	Here the right hand side evaluates to zero by \cref{eq:inversion}. So $\widetilde\omega(u_0)$ is indeed the outward unit normal to $z(u_0)$. Therefore $z(u)$ will be convex at $u_0$ if
	\begin{equation}\label{eq:curvaturez}
		\left<\cdiff{u}\Pdiff{z}{u}(u_0),\widetilde\omega(u_0)\right> < 0
	\end{equation}
	Note that letting $t(u_0)\to 0$ and $\varepsilon\to 0$, the covariant derivative of $z$ will be almost exactly the second derivative. That is,
	\begin{equation}
		\cdiff{u}\Pdiff{z^k}{u}(u_0) = \Pdifftwo{z}{u}(u_0) + \Pdiff{z^i}{u}(u_0)\Pdiff{z^j}{u}(u_0)\Gamma^{k}_{ij}(z(u_0))
	\end{equation}
	Here, once again, the Christoffel symbols will vanish as $t(u_0)\to 0$.
	Now using \cref{eq:yder2} and taking the inner product with $\omega$ gives
	\begin{equation}\label{eq:tder2}
		\Pdifftwo{t}{u}(u_0) = \left<\Pdifftwo{y}{u}(u_0),\omega(u_0)\right> - \left<\Pdifftwo{x}{u}(u_0),\omega(u_0)\right>
	\end{equation}
	Now taking the inner product of \cref{eq:zder2} and $\widetilde\omega$ and combining with \cref{eq:tder2} gives
	\begin{equation}\label{eq:zcurv}
		\left<\Pdifftwo{z}{u}(u_0),\widetilde\omega(u_0)\right> = \left<\Pdifftwo{y}{u}(u_0),\widetilde\omega(u_0)-\omega(u_0)\right> + \left<\Pdifftwo{x}{u}(u_0),\omega(u_0)\right>\nonumber
	\end{equation}
	Note that the second term is negative since $x(u)$ is convex. And by \cref{eq:inversion} we have
	\begin{equation}\label{eq:curvdiff}
		\left<\Pdifftwo{y}{u}(u_0),\widetilde\omega(u_0)-\omega(u_0)\right> = -2\left<\omega(u_0),v(u_0)\right>\left<\Pdifftwo{y}{u}(u_0),v(u_0)\right>
	\end{equation}
	Since $\omega(u_0)$ points into $\partial K$ and $v(u_0)$ point outwards, their inner product will be negative. Moreover the convexity of $\partial K$ implies that the second term on the right hand side will be negative as well. So \cref{eq:curvdiff} will be negative. Thus combining with \cref{eq:zcurv} we see that \cref{eq:curvaturez} is satisfied, making $z$ convex at $u_0$.
\end{proof}

\appendix
\section{}
\label{appendix}
\begin{proof}[Proof of \Cref{thm:distincttimes}]\label{proof:distincttimes}
	Suppose $\omega_0$ and $\rho_0$ are distinct outgoing directions at $x_0$ such that the geodesics generated by them, $\gamma_{(x_0,\omega_0)}$ and $\gamma_{(x_0,\rho_0)}$, have the same endpoint $y_0\in \partial S$ and the same travelling time $t_0$. That is, $\gamma_{(x_0,\omega_0)}(t_0) = \gamma_{(x_0,\rho_0)}(t_0) = y_0$.
	Let $\omega:U\to \T{S}_{x_0}$ and $\rho:V\to \T{S}_{x_0}$ be any two local parametrisation of neighbourhoods $U$ and $V$ of $\omega_0$ and $\rho_0$ respectively. Also suppose that $\omega(u)\neq \rho(v)$ always. Let $t_1(u)$ and $t_2(v)$ be the travelling time functions for the two geodesics respectively, and define their difference $T(u,v) = t_1(u)-t_2(v)$. Now let $\alpha : W\to \partial S$ be a local parametrisation of $\partial S$ around $y_0$. Let $y(u)$ and $z(v)$ be the endpoints of $\gamma_{(x_0,\omega(u))}$ and $\gamma_{(x_0,\rho(v))}$ respectively. Also, let $\widetilde{u}(u)$ be such that $\alpha(\widetilde{u}) = y(u)$ and similarly with $\widetilde{v}(v)$. 
	Now we can define a function $H(u,v) = \widetilde{u}-\widetilde{v}$ and thus define a function $G(u,v) = (T(u,v),H(u,v))$. Note that by definition $G(u,v) = (0,0)$ if and only if $\omega(u)$ and $\rho(v)$ generate a geodesic with the same endpoint and travelling times. Now consider the derivative of $G$. It is only nonsingular if its columns $grad~T$ and $grad~H$ are linearly independent. We show that this is the case for all $(u,v)\in G^{-1}(0,0)$. First consider $\pdiff{u}T(u,v)$, by \Cref{thm:timeangle} we know that
	\[ \pdiff{u}T(u,v) = \left<\sigma(u),\pdiff{u} y(u)\right> = \left<\sigma(u), \alpha'(\frac{\partial\widetilde{u}}{\partial u}) \right>. \]
	Here $\sigma(u)$ is the incoming direction of the geodesic $\gamma_{(x_0,\omega(u))}$ at the endpoint. We also know that
	\[ \pdiff{u}H(u,v) = \frac{\partial\widetilde{u}}{\partial u},\] 
	and similarly with respect to $v$. Now suppose that for some $A, B\in\reals$ we have $A~grad~T + B~grad~H = 0$. Then we must have the following
	\begin{align}
		A \left<\sigma(u), \alpha'( \frac{\partial\widetilde{u}}{\partial u} )\right> + B\frac{\partial\widetilde{u}}{\partial u} = 0.\\
		A \left<\widetilde{\sigma}(v), \alpha'( \frac{\partial\widetilde{v}}{\partial v} )\right> + B\frac{\partial\widetilde{v}}{\partial v} = 0.
	\end{align}
	Which gives the equation
	\begin{equation}\label{eq:omegarho}
		A \left<\sigma(u)-\widetilde{\sigma}(v), \alpha'( \frac{\partial\widetilde{u}}{\partial u}\frac{\partial\widetilde{v}}{\partial v} )\right> = 0.
	\end{equation}
	Note that since $\omega(u)\neq \rho(v)$ we have $\sigma(u)\neq \widetilde\sigma(v)$. And since $\sigma$ and $\widetilde\sigma$ are both of unit length and are incoming to $\partial S$, \cref{eq:omegarho} implies that $A=0$.
	But then $B=0$ and so $grad~T$ and $grad~H$ are linearly independent. Thus $G^{-1}(0,0)$ is a 0-dimensional submanifold of $U\times V$, that is $G^{-1}(0,0)$ is countable. Let $\widetilde U\times \widetilde V$ be the image of $U\times V$ in $(\T{S}_{x_0})^{2}$, let $\widetilde O$ be the image of $G^{-1}(0,0)$ in $\widetilde U\times\widetilde V$. We find corresponding $\widetilde U_{w}\times \widetilde V_{\rho}$ and $\widetilde O_{(\omega,\rho)}$ for each $(\omega,\rho)\in (\T{S}_{x_0})^{2}$ such that the geodesics generated by them have the same travelling times and endpoint. We can find a countable subcover of those  $\widetilde U_{w}\times \widetilde V_{\rho}$, giving a countable union of countable sets $\mathcal{O} = \cup \widetilde O_{(\omega,\rho)}$, which proves the lemma.
\end{proof}

\begin{proof}[Proof of \Cref{thm:nontrappedboundary}]
	Start with an open set $U\subseteq T_1\partial K$ and let \[X:[a,b]\to S_K,\] be the convex front constructed from $U$ as in \Cref{thm:convexlemma}. Let $\omega(u)$ be the unit outward normal at $X(u)$ and define $Y(u) = (X(u),\omega(u)) \in \uT S_K$. Also let $\Sigma = Y([a,b])$, and define $\Sigma'$ to be the set of non-tangent points in $\Sigma$. By \Cref{thm:onetangent} the set $\Sigma\backslash\Sigma'$ is countable. Now define $\widetilde\Sigma$ to be the set of trapped points in $\Sigma'$.
	
	Let $F = \{1,\cdots,k\}$ and define the space
	\begin{equation}
		\widetilde F = \prod_{i=1}^\infty F
	\end{equation}
	We give $\widetilde F$ the following metric
	\begin{equation}
		\rho(a,b) = \sum_{i=1}^{\infty}\frac{1}{3^i}\delta(a_i,b_i)
	\end{equation}
	Here $\delta(a_i,b_i)=0$ if $a_i = b_i$ and 1 otherwise. Now define a function $f:\widetilde\Sigma\to \widetilde F$ as follows. Given $x$ in $\widetilde\Sigma$, the geodesic generated by $x$ will hit reflect of the components of $K$ in a certain order, say $K_{i_1}, K_{i_2},\dots, K_{i_n}, \dots$. We define $f(x) = (i_1,i_2,\dots)$, where $i_n$ encodes the fact that the geodesic generated by $x$ will hit $K_{i_n}$ at the $n^{\text{th}}$ reflection. We first show that $f$ is continuous. Fix for a moment $x_0\in \widetilde\Sigma$. Then given $\varepsilon>0$ there exists an integer $N$ such that $(2\cdot 3^{N-1})^{-1}<\varepsilon$.
	Now let $(i_1,i_2,\dots,i_N,\dots) = f(x_0)$. Then by assumption $x_0$ will hit $K_{i_j}$ transversally for each $1\leq j\leq N$. So there is a neighbourhood $V$ of $x_0$ in $\widetilde\Sigma$ such that every $y$ in $V$ hits the components $K_{i_1},\dots,K_{i_N}$ in that order. So there is a $\delta>0$ such that if $d(x_0,y)<\delta$ we have $y\in V$. Now simply compute the distance in $\widetilde F$:
	\begin{equation}\label{eq:metricbound}
		\rho(f(x_0),f(y)) = \sum_{i=1}^{\infty}\frac{1}{3^i}\delta(f(x_0)_i,f(y)_i) \leq \sum_{i=n}^\infty \frac{1}{3^i}\leq \frac{1}{2\cdot 3^{N-1}}
	\end{equation}
	So we get that $\rho(f(x_0),f(y))<\varepsilon$ as required. Thus $f$ is continuous.
	
	We show that $f$ is injective as well.
	Suppose $x,y\in\Sigma$ are two nearby points such that the geodesic they generate will reflect off the same components $K_{\eta_1},\dots,K_{\eta_n}$ of $K$, in the same order $\eta_1,\dots,\eta_n$. By assumption, between $x$ and $y$ the convex front will propagate forward without tangencies (for the $n$ reflections). Let $\widetilde x$ and $\widetilde y$ be the points $x$ and $y$ after $n$ reflections. We now use the fact that the  generalised geodesic flow is uniformly hyperbolic (see \cite{MR1022522} and \cite{MR3808990}) to get some bounds on the distance between $x$ and $y$.
	
	Let 
	\begin{equation}
		b=\min_{1\leq i,j\leq k}\{\text{dist}(K_i,K_j)\}
	\end{equation}
	Here $k$ is the number of components of $K$. Also let $\widetilde\gamma:[0,\widetilde L]\to \uT S_K$ be a unit speed geodesic from  $\widetilde x$ to $\widetilde y$, where $\widetilde L = d(\widetilde x,\widetilde y)$. There exist $t_x,t_y\geq nb$ such that $\genflow_{t_x}(x) = \widetilde x$ and similarly for $y$. Let $t_0 = \text{max}\{t_x,t_y\}$. Then define $\gamma = \genflow_{-t_0}(\widetilde\gamma)$. Now consider the length
	\begin{equation}\label{eq:distbounds}
		L = \int_0^{\widetilde L} \norm{\pdiff{t	}\gamma}\ dt \leq \int_0^{\widetilde L} \norm{d_x\genflow_{t_0}}\cdot\norm{\pdiff{t}\widetilde \gamma}\ dt
	\end{equation}
	Since $\gamma$ is unit speed parameterised, we can use the hyperbolicity of $\genflow_t$ to get the following
	\begin{equation}
		d(x,y)\leq L \leq \widetilde L a\lambda^{t_0} \leq d(\widetilde x,\widetilde y)a\lambda^{nb}
	\end{equation}
	For some constants $a>0$ and $0<\lambda<1$. 
	Note that to get the last inequality we used the fact that $t_0\geq nb$.
	Thus $d(x, y)\to 0$ as $n\to\infty$. So we can see that for $x,y\in\widetilde\Sigma$ such that $f(x)=f(y)$, we have $x = y$. That is, $f$ is injective.
	
	So $f$ is bijective onto its image. Note that $d(\widetilde x,\widetilde y)$ is bounded above by some constant $C>0$. So \cref{eq:distbounds} gives $d(x,y)<Ca\lambda^{nb}$,
	where $n$ is the maximum number such that $f(x)_i = f(y)_i$ for all $1\leq i \leq n$. So given $\varepsilon>0$ there is some $N\in\mathbb{N}$ such that $Ca\lambda^{Nb}<\varepsilon$. Now by \cref{eq:metricbound}, setting \[\rho(f(x),f(y))<\frac{1}{3^{2N}}\]
	will necessarily force $f(x)_i = f(y)_i$ for all $1\leq i \leq N$. So we must have $d(x,y)<\varepsilon$. Thus the inverse of $f$ is also continuous, meaning it is a homeomorphism onto its image.
	
	It is well known that $\widetilde F$ has topological dimension zero (see \cite{MR0482697} for example). Thus $f(\widetilde\Sigma)$ will also have dimension 0, and since $f$ is a homeomorphism, so will $\widetilde\Sigma$. Therefore the dimension of $\Sigma\backslash(\widetilde\Sigma\cup\Sigma')$ is 1, so the result follows.
	\end{proof}

\section*{References}
\bibliographystyle{elsarticle-num}
\bibliography{bibliography}
\end{document}